\def\marker{\>\hbox{${\vcenter{\vbox{
    \hrule height 0.4pt\hbox{\vrule width 0.4pt height 6pt
    \kern6pt\vrule width 0.4pt}\hrule height 0.4pt}}}$}\>}
\newtheorem{theorem}{Theorem} 
\newtheorem{theorem*}{Theorem} 
\newtheorem{proposition}[theorem]{Proposition} 
\newtheorem{lemma}[theorem]{Lemma}
\newtheorem{conjecture}[theorem]{Conjecture}
\theoremstyle{definition}
\newtheorem{question}{Question}
\newtheorem{construction}{Construction}
\theoremstyle{remark}
\newcommand{\FL}[1]{\left\lfloor #1 \right\rfloor}
\newcommand{\set}[2]{\{#1,\ldots,#2\}}
\DeclareMathOperator{\sat}{sat}
\title{Saturation numbers in tripartite graphs}
\author{Eric Sullivan\footnotemark[1] and Paul S.\ Wenger\footnotemark[2]}
\begin{document}

\maketitle

\begin{abstract}
Given graphs $H$ and $F$, a subgraph $G\subseteq H$ is an {\it $F$-saturated subgraph} of $H$ if $F\nsubseteq G$, but $F\subseteq G+e$ for all $e\in E(H)\setminus E(G)$.
The {\it saturation number of $F$ in $H$}, denoted $\sat(H,F)$, is the minimum number of edges in an $F$-saturated subgraph of $H$.
In this paper we study saturation numbers of tripartite graphs in tripartite graphs.
For $\ell\ge 1$ and  $n_1$, $n_2$, and $n_3$ sufficiently large, we determine $\sat(K_{n_1,n_2,n_3},K_{\ell,\ell,\ell})$ and $\sat(K_{n_1,n_2,n_3},K_{\ell,\ell,\ell-1})$ exactly and $\sat(K_{n_1,n_2,n_3},K_{\ell,\ell,\ell-2})$ within an additive constant.
We also include general constructions of $K_{\ell,m,p}$-saturated subgraphs of $K_{n_1,n_2,n_3}$ with few edges for $\ell\ge m\ge p>0$.

{\bf Keywords: 05C35; saturation; tripartite; subgraph} 
\end{abstract}

\renewcommand{\thefootnote}{\fnsymbol{footnote}}
\footnotetext[1]{
Department of Mathematical and Statistical Sciences, University of Colorado Denver, Denver, CO; {\tt eric.2.sullivan@ucdenver.edu}}
\footnotetext[2]{
School of Mathematical Sciences, Rochester Institute of Technology, Rochester, NY;
{\tt pswsma@rit.edu}.}
\renewcommand{\thefootnote}{\arabic{footnote}}

\baselineskip18pt

\section{Introduction}

In this paper, all graphs are simple and we let $V(G)$ and $E(G)$ denote the vertex set and edge set of the graph $G$, respectively.
Let $\overline{G}$ denote the complement of $G$.
For a set of vertices $S\subseteq V(G)$, we let $G[S]$ denote the induced subgraph of $G$ on $S$.

Given a graph $F$, a graph $G$ is {\it $F$-saturated} if $F$ is not a subgraph of $G$ but $F$ is a subgraph of $G+e$ for any edge $e\in E(\overline{G})$.
The {\it saturation number} of $F$ is the minimum size of an $n$-vertex $F$-saturated graph, and is denoted $\sat(n,F)$.
Saturation numbers were first studied by Erd\H os, Hajnal, and Moon~\cite{EHM}, who proved that $\sat(n,K_k)=(k-2)n-\binom{k-1}{2}$ and characterized the $n$-vertex $K_k$-saturated graphs with this number of edges.
For a thorough account of the results known about saturation numbers, the reader should consult the excellent survey of Faudree, Faudree, and Schmitt~\cite{FFS}.

Because saturation numbers consider the addition of any edge from $\overline G$ to $G$, it is natural in this setting to think of $G$ as a subgraph of the complete graph $K_n$.
In this paper we consider saturation numbers when $G$ is treated as a subgraph of a complete tripartite graph.

Let $F$ and $H$ be graphs be fixed graphs; we call $H$ the {\it host graph}.
A subgraph $G$ of $H$ is an {\it $F$-saturated subgraph} of $H$ if $F$ is not a subgraph of $G$, but $F$ is a subgraph of $G+e$ for all $e\in E(H)\setminus E(G)$.
The {\it saturation number of $F$ in $H$} is the minimum number of edges in an $F$-saturated subgraph of of $H$, and is denoted $\sat(H,F)$.
With this notation, $\sat(n,F)=\sat(K_n,F)$.

The first result on saturation numbers in host graphs that are not complete is from a related problem in bipartite graphs.
Let $\sat(K_{(n_1,n_2)},K_{(\ell,m)})$ denote the minimum number of edges in a bipartite $G$ graph on the vertex set $V_1\cup V_2$ where $|V_i|=n_i$ such that: 1) $G$ does not contain $K_{\ell,m}$ with $\ell$ vertices in $V_1$ and $m$ vertices in $V_2$, and 2) the addition of any edge joining $V_1$ and $V_2$ yields a copy of $K_{\ell,m}$ with $\ell$ vertices in $V_1$ and $m$ vertices in $V_2$.
This parameter is the minimization analogue of the Zarankiewicz number.
Bollob\' as and Wessel~\cite{Bol1,Bol2,Wes1,Wes2} independently proved that  $\sat(K_{(n_1,n_2)},K_{(\ell,m)})=(m-1)n_1+(\ell-1)n_2-(m-1)(\ell-1)$ for $2\le \ell\le n_1$ and $2\le m\le n_2$, confirming a conjecture of Erd\H os, Hajnal, and Moon from~\cite{EHM}.

In~\cite{MS}, Moshkovitz and Shapira studied saturation numbers in $d$-uniform $d$-partite hypergraphs.
When $d=2$, this reduces to saturation numbers of bipartite graphs in bipartite graphs.
They provided a construction showing that $\sat(K_{n,n},K_{\ell,m})\le(\ell+m-2)n-\FL{\left(\frac{(\ell+m-2)}{2}\right)^2}$ and conjectured that the bound is sharp for $n$ sufficiently large.
This upper bound shows that for $n$ sufficiently large, $\sat(K_{n,n},K_{\ell,m})< \sat(K_{(n,n)},K_{(\ell,m)})$.
Recently, Gan, Kor\'andi and Sudakov \cite{GKS} showed that $\sat(K_{n,n},K_{\ell,m})\ge (\ell+m-2)n-(\ell+m-2)^2$ and proved that the Moshkovitz-Shapira bound is sharp for $K_{2,3}$, the first nontrivial case.

Let $K_{k}^{n}$ denote the complete $k$-partite graph in which each partite set has order $n$.
In~\cite{FJPW}, Ferrara, Jacobson, Pfender, and the second author studied the saturation number of $K_3$ in balanced multipartite graphs.
They proved that if $k\ge 3$ and $n\ge 100$, then
\[\sat(K_k^n,K_3)=\min\{2kn+n^2-4k-1, 3kn-3n-6\}.\]
Furthermore, they characterized the $K_3$-saturated subgraphs of $K_k^n$ of minimum size.

The focus of this paper is the saturation numbers in complete tripartite graphs.
In Section~\ref{sec:con}, we provide constructions of $K_{\ell,m,p}$-saturated subgraphs of $K_{n_1,n_2,n_3}$ with small size.
In Section~\ref{Sec3}, we determine $\sat(K_{n_1,n_2,n_3},K_{\ell,\ell,\ell})$ and $\sat(K_{n_1,n_2,n_3},K_{\ell,\ell,\ell-1})$ and characterize the $K_{\ell,\ell,\ell}$-saturated subgraphs and $K_{\ell,\ell,\ell-1}$-saturated subgraphs of $K_{n_1,n_2,n_3}$ of minimum size.
In Section~\ref{sec4}, we prove that for $\sat(K_{n,n,n},K_{\ell,\ell,\ell-2})$, the upper bound obtained from the construction in Section~\ref{sec:con} is correct within an additive constant depending on $\ell$.
Finally, Section~\ref{sec5} contains various conjectures and open questions for future work.

Throughout the paper, we will assume that $n_1\ge n_2\ge n_3$, and that the partite sets of $K_{n_1,n_2,n_3}$ are $V_1$, $V_2$, and $V_3$ with $|V_i|=n_i$.
We label the vertices in $V_i$ as $V_i=\{v_i^1,\ldots,v_i^{n_i}\}$.
When $G$ is a tripartite graph on the vertex set $V_1\cup V_2\cup V_3$ we let $\delta_i(G)$ denote the minimum degree of the vertices in $V_i$.
When the graph in question is clear we simply write $\delta_i$.
For a vertex $v\in G$, we let $N_i(v)$ denote the set of neighbors of $v$ in set $V_i$; that is, $N_i(v)=N(v)\cap V_i$.
Similarly, if $S$ is a set of vertices in $G$, then $N_i(S)=\bigcup_{v\in S}N_i(v)$.
Throughout the paper, all arithmetic in subscripts is performed modulo 3.
We also use $[k]$ to denote the set $\{1,\ldots,k\}$.

\section{Constructions of saturated subgraphs of $K_{n_1,n_2,n_3}$}\label{sec:con}

This section contains constructions of $K_{\ell,m,p}$-saturated subgraphs of $K_{n_1,n_2,n_3}$ with few edges.
We begin with two constructions of $K_{\ell,m,p}$-saturated subgraphs of $K_{n_1,n_2,n_3}$ when $m=p$.
The reader is invited to keep in mind the particular case of $K_{\ell,\ell,\ell}$, in which the constructions are greatly simplified and which we prove are best possible in Section~\ref{Sec3}.

\begin{construction}\label{con:1}
Let $\ell$ and $m$ be positive integers such that $\ell\ge m$.
Let $n_1\ge n_2\ge n_3\ge \max\{\ell+2,3\ell-2m-2\}$.
For each $i\in[3]$, let $S_i$ be the $m$-vertex set $\set{v_i^{n_i-m+1}}{v_i^{n_i}}$ and join $S_i$ to $V_{i+1}$, and $V_{i+2}$.
When $\ell>m$, add the following edges, where arithmetic in the superscripts of vertices in $V_i$ is performed modulo $n_i-m$:
\begin{enumerate}
\item for $a\in [n_3-m]$, join $v_3^a$ to $\{v_1^a,\ldots,v_1^{a+\ell-m-1}\}\cup \{v_2^a,\ldots,v_2^{a+\ell-m-1}\}$;
\item for $a\in [n_2-m]$, join $v_2^a$ to $\{v_1^{a+\ell-m},\ldots,v_1^{a+2\ell-2m-1}\}$.
\end{enumerate}
Finally, in all cases, remove the edges $v_1^{n_1}v_2^{n_2}$, $v_1^{n_1}v_3^{n_3}$, and $v_2^{n_2}v_3^{n_3}$ (see Figure~\ref{fig:con1}). 
We call this graph $G_1$.

For a set of integers $S$, let $S\bmod n$ denote the set of residues of the elements of $S$ modulo $n$.
Thus we have 
\begin{align*}
E(G_1)&=\big(\{v_i^rv_j^s : i\in[3],j\in[3],i\neq j,  n_i-m+1\le r\le n_i \text{ or } n_j-m+1\le s\le n_j \}\\
&\qquad\cup\{v_3^{a} v_j^{b}: j\in\{1,2\}, a\in[n_3-m],b\in\set{a}{a+\ell-m-1}\bmod{(n_j-m)}\}\\
&\qquad\cup \{v_2^{a}v_1^{b}: a\in[n_2-m],b\in\set{a+\ell-m}{a+2\ell-2m-1 }\bmod{(n_1-m)}\}\big)\\
&\qquad \setminus  \{v_1^{n_1}v_2^{n_2}, v_1^{n_1}v_3^{n_3},v_2^{n_2}v_3^{n_3} \}.
\end{align*}
\end{construction}

\begin{figure}[h]
\centering
\begin{tikzpicture}[x=.6cm,y=.6cm]
\path
(0,1.75) node [shape=ellipse,minimum width=1.5cm, minimum height=.75cm,draw](V1){$V_1\backslash S_1$}
(0,0) node [shape=circle, minimum width=.75cm , draw](S1){$S_1$}
(0,-8) node (r1){max degree}
(0,-8.7) node (r6){$\ell-m$}
(-6,-1.5) node (r2){max degree}
(-6,-2.2) node (r5){$\ell-m$}
(6,-1.5) node (r3){max degree}
(6,-2.2) node (r4){$\ell-m$}
(-5.5,-5.196) node [shape=ellipse,minimum width=1.5cm, minimum height=.75cm,draw](V3){$V_3\backslash S_3$}
(-3,-5.196) node [shape=circle,minimum width=.75cm , draw](S3){$S_3$}
(3,-5.196) node [shape=circle,minimum width=.75cm , draw](S2){$S_2$}
(5.5,-5.196) node  [shape=ellipse,minimum width=1.5cm, minimum height=.75,draw](V2){$V_2\backslash S_2$}

(0,-1.7) node [draw=none](n1){$v_1^{n_1}$}
(1.4,-4.3) node [draw=none](n2){$v_2^{n_2}$}
(-1.5,-4.3) node [draw=none](n3){$v_3^{n_3}$};
\fill (0,-.75) circle (3pt)
(2.4,-4.75)circle (3pt) 
(-2.4,-4.75)circle (3pt);
\draw (V2) to [bend right] (S1) (S1) to [bend right] (V3)
        (V2)to [bend left] (S3) (S3) to [bend left] (V1) (V1)to [bend left](S2)(S2)to [bend left](V3);
\draw       
 (S3)to [bend left](S1)(S1)to [bend left](S2)(S2)to [bend left](S3);
\draw (V3) to [bend right=24] (r1) (r1) to [bend right=24] (V2);
\draw (V3) to [bend left=15] (r5) (r2) to [bend left=22] (V1);
\draw (V2) to [bend right=15] (r4) (r3) to [bend right=22] (V1);
\draw [dotted, thick] (0,-.75) to (2.4,-4.75) to (-2.4,-4.75) to(0,-.75);
\end{tikzpicture}
\caption[Construction~\ref{con:1}, a $K_{\ell,m,m}$-saturated subgraph of $K_{n_1,n_2,n_3}$]{\label{fig:con1}Construction~\ref{con:1}: A $K_{\ell,m,m}$-saturated subgraph of $K_{n_1,n_2,n_3}$.  Solid lines denote complete joins between sets, and dotted lines denote edges that have been removed.  The lines marked with ``max degree $\ell-m$" represent the edges described in items 1 and 2 of Construction~\ref{con:1}.}
\end{figure}
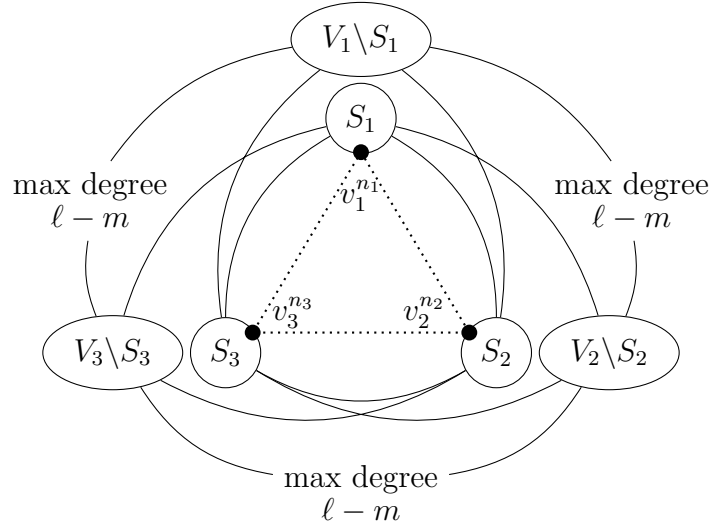

For the particular case of $K_{1,1,1}$, Construction~\ref{con:1} reduces to the obvious extension of the tripartite case of Construction~2 from~\cite{FJPW}.

Our next construction describes a family of three $K_{\ell,m,p}$-saturated subgraphs of $K_{n_1,n_2,n_3}$ for the case when $m=p$.
It is a very slight modification of Construction~\ref{con:1}.

\begin{construction} \label{con:2}  
For $i\in [3]$, let $G_2^i$ be the graph obtained from the graph from Construction~\ref{con:1} by removing the set $\{v_i^{n_i}v_{i+1}^{n_{i+1}}, v_i^{n_i-1}v_{i+2}^{n_{i+2}}, v_{i+1}^{n_{i+1}}v_{i+2}^{n_{i+2}}\}$
instead of $\{v_1^{n_1}v_2^{n_2},v_1^{n_1}v_3^{n_3},v_2^{n_2}v_3^{n_3}\}$ (see Figure~\ref{fig:con2}). 
\end{construction}
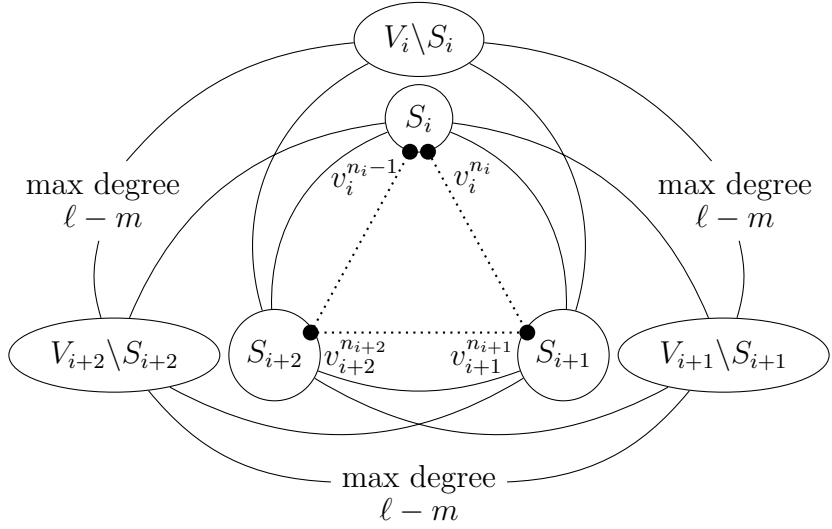
\begin{figure}[h]
\centering
\begin{tikzpicture}[x=.6cm,y=.6cm]
\path
(0,1.75) node [shape=ellipse,minimum width=1.5cm, minimum height=.75cm,draw](V1){$V_i\backslash S_i$}
(0,0) node [shape=circle, minimum width=.75 cm, draw](S1){$S_i$}
(0,-8) node (r1){max degree}
(0,-8.7) node (r6){$\ell-m$}
(-7,-1.5) node (r2){max degree}
(-7,-2.2) node (r5){$\ell-m$}
(7,-1.5) node (r3){max degree}
(7,-2.2) node (r4){$\ell-m$}
(-6.75,-5.25) node [shape=ellipse,minimum width=1.5cm, minimum height=.75cm,draw](V3){$V_{i+2}\backslash S_{i+2}$}
(-3.2,-5.25) node [shape=circle,minimum width=.75 cm, draw](S3){$S_{i+2}$}
(3.2,-5.25) node [shape=circle,minimum width=.75 cm, draw](S2){$S_{i+1}$}
(6.75,-5.25) node  [shape=ellipse,minimum width=1.5cm, minimum height=.75cm,draw](V2){$V_{i+1}\backslash S_{i+1}$}

(-1.2,-1.3) node[draw=none, ](n1){$v_i^{n_i-1}$}
(1.2,-1.3) node [draw=none,](n1){$v_i^{n_i}$}
(1.4,-5.3) node [draw=none](n2){$v_{i+1}^{n_{i+1}}$}
(-1.4,-5.3) node [draw=none](n3){$v_{i+2}^{n_{i+2}}$};
\fill (.2,-.74)circle (3pt)
(-.2,-.74) circle (3pt)
(2.4,-4.75)circle (3pt) 
(-2.4,-4.75)circle (3pt);
\draw (V2) to [bend right] (S1) (S1) to [bend right] (V3)
        (V2)to [bend left] (S3) (S3) to [bend left=40] (V1) (V1)to [bend left=40](S2)(S2)to [bend left](V3);
\draw       
 (S3)to [bend left=35](S1)(S1)to [bend left=35](S2)(S2)to [bend left=20](S3);
\draw [dotted, thick] (.2,-.74) to (2.4,-4.75) to (-2.4,-4.75) to(-.2,-.74);
\draw (V3) to [bend right=24] (r1) (r1) to [bend right=24] (V2);
\draw (V3) to [bend left=15] (r5) (r2) to [bend left=22] (V1);
\draw (V2) to [bend right=15] (r4) (r3) to [bend right=22] (V1);
\end{tikzpicture}
\caption[Construction~\ref{con:2}, A $K_{\ell,m,m}$-saturated subgraph of $K_{n_1,n_2,n_3}$]{\label{fig:con2}Construction~\ref{con:2}: A $K_{\ell,m,m}$-saturated subgraph of $K_{n_1,n_2,n_3}$.  Solid lines denote complete joins between sets, and dotted lines denote edges that have been removed.
The lines marked with ``max degree $\ell-m$" represent the edges described in items 1 and 2 of Construction~\ref{con:1}.}
\end{figure}

\begin{theorem}\label{thm:lmmupperbound}
Let $\ell$ and $m$ be positive integers such that $\ell\ge m$.
For $n_1\ge n_2\ge n_3\ge \max\{\ell+2,3\ell-2m-1\}$, the graphs from Construction~\ref{con:1} and Construction~\ref{con:2} are $K_{\ell,m,m}$-saturated subgraphs of $K_{n_1,n_2,n_3}$.
Thus, \[ \sat(K_{n_1,n_2,n_3},K_{\ell,m,m})\le 2m(n_1+n_2+n_3)+(\ell-m)(n_2+2n_3)-3\ell m-3.\]
\end{theorem}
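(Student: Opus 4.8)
The plan is to verify directly that $G_1$ (and then each $G_2^i$) has the two defining properties of a $K_{\ell,m,m}$-saturated subgraph of $K_{n_1,n_2,n_3}$, namely that $K_{\ell,m,m}\not\subseteq G_1$ and that adding any missing edge creates a copy of $K_{\ell,m,m}$; the edge count then follows by a bookkeeping computation. Since $K_{\ell,m,m}$ is not balanced unless $\ell=m$, I first note that a copy of $K_{\ell,m,m}$ in a tripartite graph must have its part of size $\ell$ in one of $V_1,V_2,V_3$ and the two parts of size $m$ in the other two; so there are (up to the obvious symmetry) three ``orientations'' to worry about, and I must check all of them. Throughout I will use the structural description of $E(G_1)$ displayed in Construction~\ref{con:1}: the sets $S_i$ are completely joined to the other two parts (minus three deleted edges among $S_1\cup S_2\cup S_3$), while outside the $S_i$ the graph is a sparse ``circular interval'' graph in which $v_3^a$ sees $\ell-m$ consecutive vertices in each of $V_1\setminus S_1$ and $V_2\setminus S_2$, and $v_2^a$ sees $\ell-m$ further consecutive vertices in $V_1\setminus S_1$.

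For the first property (no $K_{\ell,m,m}$), suppose for contradiction that $G_1$ contains $K_{\ell,m,m}$ with parts $A\subseteq V_i$, $B\subseteq V_{i+1}$, $C\subseteq V_{i+2}$, of sizes depending on the orientation. The key observations are: (i) a vertex outside $S_j$ has very small degree into each of the other two parts (at most $2(\ell-m)$ total into one part, and into the ``forward'' part even less), which — using $n_3\ge 3\ell-2m-1$, so that these small neighbourhoods cannot cover $m$ vertices of a part that is forced to lie largely outside the $S$'s — forces most of the vertices of the copy into $\bigcup_i S_i$; (ii) but $|\bigcup_i S_i|=3m$ and the three deleted edges $v_1^{n_1}v_2^{n_2}$, $v_1^{n_1}v_3^{n_3}$, $v_2^{n_2}v_3^{n_3}$ prevent $S_1\cup S_2\cup S_3$ itself from spanning a $K_{m,m,m}$ (indeed it spans exactly $K_{m,m,m}$ minus a perfect matching on the three ``last'' vertices). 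A careful case analysis, organized by how many of the parts of the copy are contained in the corresponding $S_i$ and using the low degrees and the interval structure to rule out borrowing vertices from outside, should close this off; the inequality $n_3\ge\max\{\ell+2,3\ell-2m-1\}$ is exactly what is needed to make the counting go through in every case.

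For the second property (saturation), I take a nonedge $e=v_i^rv_j^s$ of $K_{n_1,n_2,n_3}$ not in $G_1$ and exhibit a copy of $K_{\ell,m,m}$ in $G_1+e$ using $e$. The nonedges come in two flavors: the three deleted edges among the $S_i$, and the ``generic'' nonedges between two vertices both lying outside the $S_i$ (note every vertex outside $S_j$ has full adjacency to all of $S_j$, so one never needs to add an edge with an endpoint in some $S_j$ except for those three special deletions). For a generic nonedge $v_3^a v_1^b$ with $b$ not in the length-$(\ell-m)$ window at $v_3^a$, I build the copy by taking the $\ell$-part to be $\{v_1^b\}$ together with appropriately chosen vertices of $S_1$ (and, if needed, a short run of the interval neighbourhood), and the two $m$-parts from $S_2$, $S_3$ together with $v_3^a$; the circular-interval design in items 1 and 2 of Construction~\ref{con:1} is arranged precisely so that $v_3^a$ and its chosen companions have a common neighbourhood of the right size in $V_1$ once $e$ is present, and similarly for nonedges of the other two types. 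The three special deleted edges are handled by hand: adding $v_1^{n_1}v_2^{n_2}$ back lets $S_1\cup S_2\cup S_3$ (with the roles assigned so the still-missing deletions fall inside parts that only need $m-1$... ) — more precisely one uses $S_1,S_2$ as two of the parts and $S_3$ minus $v_3^{n_3}$ plus one more vertex from outside as the third — to furnish the copy. Finally, the edge count: summing $2m$ complete joins contributes $2m(n_1+n_2+n_3)$ up to the overcounting correction $-3\ell m$ from the $S_i$'s pairwise joins and the $-3$ for the deleted edges, and items 1--2 contribute $(\ell-m)$ per vertex of $V_3\setminus S_3$ in two directions plus $(\ell-m)$ per vertex of $V_2\setminus S_2$, i.e.\ $(\ell-m)(2(n_3-m)+(n_2-m))$, which after absorbing the $-3m(\ell-m)$ into the constant gives $(\ell-m)(n_2+2n_3)$; collecting terms yields the claimed bound. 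For Construction~\ref{con:2}, one observes that $G_2^i$ differs from $G_1$ only by which three edges forming a triangle on ``top'' vertices are removed, the argument is identical after relabeling, and the edge count is unchanged.

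The main obstacle I expect is the first property: ruling out $K_{\ell,m,m}$ in all three orientations simultaneously, because when $\ell-m$ is not tiny the interval neighbourhoods are large enough that a naive degree count does not immediately force all relevant vertices into $\bigcup S_i$, and one has to argue more delicately that a set of $m$ pairwise ``interval-close'' vertices in $V_1\setminus S_1$ cannot have $\ell$ common neighbours — here the hypothesis $n_3\ge 3\ell-2m-1$ and the offsets $\ell-m$, $2\ell-2m-1$ chosen in items 1--2 are doing real work, and getting the bookkeeping exactly right across the orientations (rather than within a constant) is the delicate part.
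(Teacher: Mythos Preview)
Your overall structure (verify $K_{\ell,m,m}$-freeness, verify saturation, count edges) matches the paper's, but you are missing the one observation that collapses the part you flag as the ``main obstacle.'' The paper notes that $G-(S_1\cup S_2\cup S_3)$ is \emph{triangle-free}: by the offset choices in items 1 and 2, if $v_3^d v_1^b$, $v_3^d v_2^c$, and $v_2^c v_1^b$ are all edges outside the $S_i$'s, then $c$ would have to lie simultaneously in $\{d,\ldots,d+\ell-m-1\}$ and in $\{d-2\ell+2m+1,\ldots,d-1\}$ modulo $n_2-m$, and the lower bound on $n_3$ makes these disjoint. Once you have triangle-freeness, the $K_{\ell,m,m}$-freeness is two lines: any vertex $v\in V_i\setminus S_i$ has $|N_{i+1}(v)|=|N_{i+2}(v)|=\ell$, and since any $w\in N_{i+2}(v)\setminus S_{i+2}$ has no neighbours in $N_{i+1}(v)\setminus S_{i+1}$, a $K_{\ell,m}$ inside $G[N(v)]$ would force its $m$-side to equal some $S_j$ and its $\ell$-side to be all of $N_{j'}(v)$, which is impossible because of the single deleted edge between $S_{i+1}$ and $S_{i+2}$. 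Hence both $m$-parts of any putative $K_{\ell,m,m}$ are forced to equal two of the $S_j$, and those are never fully joined. No orientation-by-orientation case analysis is needed, and the interval combinatorics you anticipate never enters.

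Two smaller points. For the three deleted edges, your proposed copy is more complicated than necessary: after adding back the edge between $S_i$ and $S_{i+1}$, the paper simply takes $S_i\cup S_{i+1}$ as the two $m$-parts and any $\ell$ vertices of $V_{i+2}\setminus\{v_{i+2}^{n_{i+2}}\}$ as the $\ell$-part. For the generic nonedge $v_j^a v_i^b$ (with $i<j$), the paper's explicit copy is $(N_i(v_j^a)+v_i^b-x_i)\cup(S_j+v_j^a-x_j)\cup S_k$, where $x_i\in S_i$, $x_j\in S_j$ are the vertices with a missing edge to $S_k$; this works uniformly for both constructions. Finally, your description of Construction~\ref{con:2} is inaccurate: the three removed edges there induce $P_4$, not $K_3$, so ``identical after relabeling'' is not literally true. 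The paper's argument handles both constructions at once precisely by phrasing Case~2 in terms of ``the vertex $x_i\in S_i$ with a nonneighbour in $S_k$'' rather than naming specific vertices.
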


\begin{proof}
Let $G$ be a graph from Construction~\ref{con:1} or~\ref{con:2}.
By construction, $G-(S_1\cup S_2\cup S_3)$ is triangle-free.
Therefore, if $v\in V_i\setminus S_i$, then $G[N(v)]$ does not contain $K_{\ell,m}$ as a subgraph.
Since $G[S_i\cup S_{i+1}]$ is not a complete bipartite graph, it then follows that $G$ is $K_{\ell,m,m}$-free.

Let $e=uv$ be a nonedge in $G$.
We show that $G+e$ contains $K_{\ell,m,m}$; there are two cases to consider.

{\bf Case 1:} {\it $e$ joins two vertices in $S_1\cup S_2\cup S_3$.}
If $e$ joins $S_i$ and $S_{i+1}$, then $G+e$ contains $K_{\ell,m,m}$ on the vertices $\{v_{i+2}^1,\ldots,v_{i+2}^{\ell}\}\cup S_i\cup S_{i+1}$.

{\bf Case 2:} {\it $e$ joins two vertices in $V(G)\setminus(S_1\cup S_2\cup S_3)$.}
Let $i,j\in[3]$ such that $i<j$, and assume that $e = v_j^av_i^b$ where $a\in [n_j-m]$ and $b\in [n_i-m]$.
Let $k$ be the third value in $[3]$.
Let $x_i\in S_i$ and $x_j\in S_j$ be the vertices that have a nonneighbor in $S_k$.
By construction, $S_i-x_i$ is completely joined to $S_j-x_j$.
In this case, $G+e$ contains $K_{\ell,m,m}$ on the vertex set $(N_i(v_j^a)+v_i^b-x_i)\cup (S_j+v_j^a-x_j)\cup S_k$.
\end{proof}

We now construct $K_{\ell,m,p}$-saturated subgraphs of $K_{n_1,n_2,n_3}$ when $m>p$.
Like Constructions~\ref{con:1} and~\ref{con:2}, the subgraph of this construction induced by $(V_1\setminus S_1)\cup (V_2\setminus S_2)\cup (V_3\setminus S_3)$ consists of bipartite graphs with maximum degree $\ell-m$.
Unlike Constructions~\ref{con:1} and~\ref{con:2}, the vertices in this set have fewer than $\ell$ neighbors in the other partite sets.
Therefore it is not necessary to specify completely the neighborhoods of these vertices.

\begin{construction}
\label{con:3}
Let $\ell$, $m$, and $p$ be positive integers such that $\ell\ge m>p$.
Let $n_1\ge n_2\ge n_3\ge \ell$.
For each $i\in [3]$ let $S_i$ be an $(m-1)$-vertex subset of $V_i$ and join $S_i$ to $V_{i+1}$ and $V_{i+2}$.
For $i<j$, join $V_i\setminus S_i$ to $V_j\setminus S_j$ with an $(\ell-m)(n_j-m+1)$-edge graph with maximum degree $\ell-m$.
Thus each vertex in $V_j\setminus S_j$ has exactly $\ell-m$ neighbors in $V_i\setminus S_i$, and each vertex in $V_i\setminus S_i$ has at most $\ell-m$ neighbors in $V_j\setminus S_j$.
\end{construction}

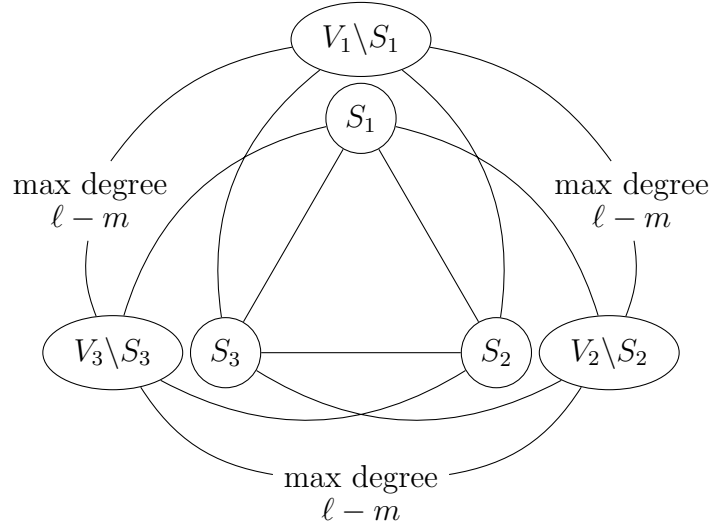
\begin{figure}[h]
\centering
\begin{tikzpicture}[x=.6cm,y=.6cm]
\path
(0,1.75) node [shape=ellipse,minimum width=1.5cm, minimum height=.75cm,draw](V1){$V_1\backslash S_1$}
(0,0) node [shape=circle, minimum width=.75cm , draw](S1){$S_1$}
(0,-8) node (r1){max degree}
(0,-8.7) node (r6){$\ell-m$}
(-6,-1.5) node (r2){max degree}
(-6,-2.2) node (r5){$\ell-m$}
(6,-1.5) node (r3){max degree}
(6,-2.2) node (r4){$\ell-m$}
(-5.5,-5.196) node [shape=ellipse,minimum width=1.5cm, minimum height=.75cm,draw](V3){$V_3\backslash S_3$}
(-3,-5.196) node [shape=circle,minimum width=.75cm , draw](S3){$S_3$}
(3,-5.196) node [shape=circle,minimum width=.75cm , draw](S2){$S_2$}
(5.5,-5.196) node  [shape=ellipse,minimum width=1.5cm, minimum height=.75,draw](V2){$V_2\backslash S_2$}

;
\draw (V2) to [bend right] (S1) (S1) to [bend right] (V3)
        (V2)to [bend left] (S3) (S3) to [bend left] (V1) (V1)to [bend left](S2)(S2)to [bend left](V3);
\draw       
 (S3)to (S1)(S1)to  (S2)(S2)to  (S3);
\draw (V3) to [bend right=24] (r1) (r1) to [bend right=24] (V2);
\draw (V3) to [bend left=15] (r5) (r2) to [bend left=22] (V1);
\draw (V2) to [bend right=15] (r4) (r3) to [bend right=22] (V1);
;
\end{tikzpicture}
\caption[Construction~\ref{con:3}, a $K_{\ell,m,p}$-saturated subgraph of $K_{n_1,n_2,n_3}$]{\label{fig:con3}Construction~\ref{con:3}: A $K_{\ell,m,p}$-saturated subgraph of $K_{n_1,n_2,n_3}$ for $m>p$.  Solid lines denote complete joins between sets.
The lines marked with ``max degree $\ell-m$" represent the $(\ell-m)(n_j-m+1)$-edge graphs with maximum degree $\ell-m$ used in Construction~\ref{con:3}.}
\end{figure}

\begin{theorem}\label{thm:lmpupperbound}
Let $\ell$, $m$, and $p$ be positive integers such that $\ell\ge m>p$.
For $n_1\ge n_2\ge n_3\ge \ell$, the graph from Construction~\ref{con:3} is a $K_{\ell,m,p}$-saturated subgraph of $K_{n_1,n_2,n_3}$.
Thus, \[ \sat(K_{n_1,n_2,n_3},K_{\ell,m,p})\le 2(m-1)(n_1+n_2+n_3)+(\ell-m)(n_2+2n_3)-3\ell(m-1)+3m-3.\]
\end{theorem}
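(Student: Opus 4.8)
The plan is to follow the same template used for Theorem~\ref{thm:lmmupperbound}, adapting it to the asymmetry introduced by $m>p$. First I would verify the edge count, which gives the displayed bound: the three complete joins $S_i\to V_{i+1}\cup V_{i+2}$ contribute $2(m-1)(n_1+n_2+n_3)$ edges after correcting for the $3\binom{m-1}{2}$... more carefully, the standard inclusion–exclusion count of $\{v_i^rv_j^s : n_i-m+2\le r \text{ or } n_j-m+2\le s\}$ over the three pairs, plus the three bipartite graphs contributing $(\ell-m)\bigl((n_2-m+1)+(n_3-m+1)+(n_3-m+1)\bigr)=(\ell-m)(n_2+2n_3)$ edges (the two joins incident to $V_3\setminus S_3$ each have $n_3-m+1$ edges of the form: each vertex of the larger side gets degree $\ell-m$; here $V_3\setminus S_3$ is the smaller side in both pairs $\{1,3\}$ and $\{2,3\}$, and $V_2\setminus S_2$ is smaller in pair $\{1,2\}$). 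I would just assert the arithmetic matches $2(m-1)(n_1+n_2+n_3)+(\ell-m)(n_2+2n_3)-3\ell(m-1)+3m-3$ and move on; this is routine bookkeeping, not the crux.

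Next I would prove $K_{\ell,m,p}$-freeness. The key observation is the same as before: $G-(S_1\cup S_2\cup S_3)$ is triangle-free (each bipartite piece between $V_i\setminus S_i$ and $V_j\setminus S_j$ has no effect on triangles because a triangle needs a vertex in each part, but then it needs an edge inside $G-(S_1\cup S_2\cup S_3)$ forming a triangle, and a tripartite triangle-free condition must be checked). Actually the cleaner route: suppose $G$ contains a $K_{\ell,m,p}$ on sets $A_1\cup A_2\cup A_3$ with $|A_i|$ a permutation of $\{\ell,m,p\}$. The part of size $\ell$ or $m$ has at least $p\ge 1$ vertices, and since every $S_i$ has only $m-1$ vertices, at least one part, say $A_t$, contains a vertex $x\in V_t\setminus S_t$. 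Then $x$ must be adjacent to all of $A_{t'}$ and $A_{t''}$ (the other two parts), so $|A_{t'}\cap(V_{t'}\setminus S_{t'})|\le\ell-m$ and likewise for $t''$; hence $A_{t'}\subseteq S_{t'}\cup(\text{at most }\ell-m\text{ extra vertices})$, giving $|A_{t'}|\le (m-1)+(\ell-m)=\ell-1<\ell$, so neither $A_{t'}$ nor $A_{t''}$ has size $\ell$. By symmetry the $\ell$-part would also need... I would push this counting to conclude no such copy exists, the governing inequality being that $p\ge 1$ forces a non-$S$ vertex somewhere while $m-1+(\ell-m)=\ell-1$ is too small; here I must also use that $A_t$ itself then can be as large as $\ell$ but the two others cap at $\ell-1$ and $m-1+(\ell-m)$ which is still $\le \ell-1 < m$ unless... one has to be slightly careful that the non-$S$ vertex lands in the right part — I would instead argue: if all three of $A_1,A_2,A_3$ met only the $S_i$'s we'd need $\max|A_i|\le m-1<m$, contradiction, so some $A_t$ has a vertex outside $S_t$, and then the neighborhood bound kills the copy as above.

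Then comes saturation: for a nonedge $e=uv$ I would show $G+e$ contains $K_{\ell,m,p}$. By symmetry of the three bipartite pieces I may assume $u=v_i^a\in V_i\setminus S_i$ and $v=v_j^b\in V_j\setminus S_j$ with $i<j$, and let $k$ be the third index. In $G+e$, the vertex $v_i^a$ now has at least $\ell-m+1$ neighbors in $V_j\setminus S_j$ (its $\le \ell-m$ old ones could be fewer, so here I need a genuinely careful choice) — better: I would build the copy as $A_k=S_k$ (size $m-1$, but I need sizes $\ell,m,p$; assign $S_k$ to be part of the $(m-1)$-or-$p$ side... ) — the honest approach mirrors Case~2 of the previous proof: take the part $S_j+v_j^b$? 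No: I want a $K_{\ell,m,p}$. I would place the $p$-part inside $S_k$ (using $p\le m-1$), the $m$-part as $S_j\cup\{v_j^b\}$ has size $m$ — wait $|S_j|=m-1$ so $S_j+v_j^b$ has size $m$, good — and the $\ell$-part inside $V_i$: take $N_i(v_j^b)\cap S_i = S_i$ (all $m-1$ of them, since $S_i$ is completely joined to $V_j$) together with $v_i^a$ and $\ell-m-1$ further neighbors of $v_j^b$ in $V_i\setminus S_i$, using that $v_j^b$ has exactly $\ell-m$ neighbors in $V_i\setminus S_i$ and one of them might be $v_i^a$ or not. If $v_i^a\in N(v_j^b)$ already then $e$ wasn't a nonedge there, so $v_i^a\notin N_i(v_j^b)$, giving $(\ell-m)+1$ available vertices in $V_i\setminus S_i$ for the $\ell$-part, plus $m-1$ from $S_i$: total $\ell$. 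One then checks all required adjacencies: $S_i,S_j,S_k$ are mutually completely joined and completely joined to everything; $v_j^b$ is joined to all of $S_i$ and to its own $\ell-m$ neighbors in $V_i\setminus S_i$ and (via $e$) to $v_i^a$, so $v_j^b$ sees the whole $\ell$-part; $v_i^a$ is joined to all of $S_j$, to $v_j^b$ via $e$, and to $S_k$, so $v_i^a$ sees the $m$-part and $p$-part; the $\ell$-part's non-$S$ vertices are neighbors of $v_j^b$ and joined to $S_j\cup S_k$ — the one gap is whether those $\ell-m-1$ chosen neighbors of $v_j^b$ in $V_i\setminus S_i$ are adjacent to $v_j^b$ (yes by choice) and to the $p$-part $\subseteq S_k$ (yes, $S_k$ completely joined), so the copy is complete. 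The main obstacle, and the step I'd spend the most care on, is exactly this adjacency verification for nonedges inside the sparse part — in particular confirming that choosing the $\ell$-part from $N_i(v_j^b)\cup\{v_i^a\}\cup S_i$ really works, that the lower bound $n_3\ge\ell$ guarantees enough vertices in each $V_i\setminus S_i$, and handling the (easier) case where $e$ has an endpoint in some $S_i$, which I'd dispatch quickly as in Case~1 above by exhibiting the copy on $\{v_k^1,\dots\}$-type vertices.
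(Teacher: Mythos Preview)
Your plan is correct and follows the paper's template, but you make both halves harder than necessary. For freeness, the paper bypasses your case analysis entirely by observing that $G$ contains no $K_{\ell,m}$ at all: every vertex of $V_i\setminus S_i$ has at most $(m-1)+(\ell-m)=\ell-1$ neighbors in each other part, so the $m$ vertices on the smaller side of any $K_{\ell,m}$ would have to fit inside some $S_j$, which has only $m-1$ vertices. That one line replaces your discussion of where $A_t$ meets $V_t\setminus S_t$ (which, as you sensed, needs a second application of the bound to rule out the $m$-part, not just the $\ell$-part). For saturation, your copy on $(N_i(v_j)+v_i)\cup(S_j+v_j)\cup S_k$ is exactly the paper's $K_{\ell,m,m-1}$, and since $m>p$ this already contains $K_{\ell,m,p}$; there is no ``endpoint in some $S_i$'' case to handle, because each $S_i$ is completely joined to the other two parts, so every nonedge of $G$ lies between two non-$S$ vertices.
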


\begin{proof}
Let $G$ be the graph described in Construction~\ref{con:3}.
Let $i\in [3]$.
If $v\in V_i\setminus S_i$, then $v$ has at most $\ell-1$ neighbors in $V_{i+1}$ and at most $\ell-1$ neighbors in $V_{i+2}$.
Since there are only $m-1$ vertices in $S_i$, it follows that $G$ does not contain $K_{\ell,m}$, and therefore $G$ is $K_{\ell,m,p}$-free.

Let $i,j\in[3]$ such that $i<j$, and let $k$ be the third value in $[3]$.
Let $e$ be a nonedge in $G$ joining $v_i\in V_i$ and $v_j\in V_j$.
Thus $G+e$ contains $K_{\ell,m,m-1}$ on the vertex set $(N_i(v_j)+v_i)\cup (S_j+v_j)\cup S_k$.
Since $m>p$, it follows that $G+e$ contains $K_{\ell,m,p}$.
\end{proof}

We include two final constructions in the special case of $K_{\ell,m,p}$-saturated subgraphs of $K_{n,n,n}$.
These constructions are inspired by the $K_{\ell,m}$-saturated subgraphs of $K_{n,n}$ used in~\cite{MS} and~\cite{GKS}.
When the host graph is balanced, Constructions~\ref{con:1},~\ref{con:2}, and~\ref{con:3} contain large $(\ell-m)$-regular graphs; we will replace those graphs with graphs with slightly fewer edges.

\begin{construction}
\label{con:4}
Let $\ell$ and $m$ be positive integers such that $\ell\ge m$ and let $$n\ge \max\left\{\ell+2,3\ell+\FL{\frac{\ell-m}{2}}-2m-2\right\}.$$
For each $i\in [3]$, let $S_i=\{v_i^1,\ldots,v_i^m\}$ and join $S_i$ to $V_{i+1}$ and $V_{i+2}$.
Let $t=\FL{\frac{\ell-m}{2}}$, and for each $i\in [3]$ let $T_i=\{v_i^{m+1},\ldots,v_i^{m+t}\}$.
For all $i\in[3]$, completely join $T_i$ to $T_{i+1}$.
Let $\bigcup_{i\in[3]}(V_i\setminus(S_i\cup T_i))$ span a triangle-free tripartite graph so that for all $i\in[3]$, each vertex in $V_i\setminus(S_i\cup T_i)$ has exactly $\ell-m$ neighbors in both $V_{i+1}\setminus(S_{i+1}\cup T_{i+1})$ and $V_{i+2}\setminus(S_{i+2}\cup T_{i+2})$ (such a graph is easily obtained using items 1 and 2 from Construction~\ref{con:1}).
Finally, remove the edges $\{v_1^1v_2^1,v_1^1v_3^1,v_2^1v_3^1\}$ (see Figure~\ref{figure:con4}).
\end{construction}

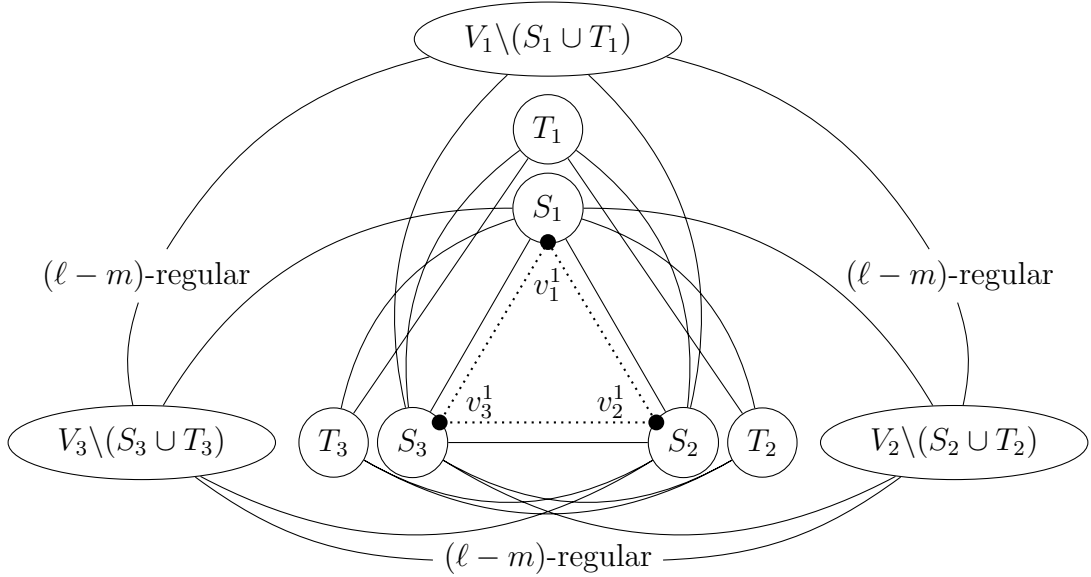
\begin{figure}[h]
\centering
\begin{tikzpicture}[x=.6cm,y=.6cm]
\path
(0,3.75) node [shape=ellipse,minimum width=1.5cm,minimum height=.75cm,draw](V1){$V_1\backslash (S_1\cup T_1)$}
(0,0) node [shape=circle,minimum width=.75 cm,draw](v1){$S_1$}
(-9,-5.196) node [shape=ellipse,minimum width=1.5cm,minimum height=.75cm,draw](V3){$V_3\backslash (S_3\cup T_3)$}
(-3,-5.196) node [shape=circle,minimum width=.75 cm,draw](v3){$S_3$}
(9,-5.196) node [shape=ellipse,minimum width=1.5cm,minimum height=.75cm,draw](V2){$V_2\backslash (S_2\cup T_2)$}
(3,-5.196) node [shape=circle,minimum width=.75 cm,draw](v2){$S_2$}
(-4.75,-5.196) node [shape=circle,minimum width=.75 cm,draw](k3){$T_3$}
(4.75,-5.196) node [shape=circle,minimum width=.75 cm,draw](k2){$T_2$}
(0,1.75) node [shape=circle,minimum width=.75 cm,draw](k1){$T_1$}
(0,-7.75) node (r1){$(\ell-m)$-regular}
(-8.9,-1.5) node (r2){$(\ell-m)$-regular}
(8.9,-1.5) node (r3){$(\ell-m)$-regular}
(0,-1.7) node [draw=none](n1){$v_1^{1}$}
(1.4,-4.3) node [draw=none](n2){$v_2^{1}$}
(-1.5,-4.3) node [draw=none](n3){$v_3^{1}$};
\fill (0,-.75) circle (3pt)
(2.4,-4.75)circle (3pt) 
(-2.4,-4.75)circle (3pt);
\draw(V1) to [bend left] (v2)(v2) to [bend left] (V3) (V3) to [bend left] (v1) (v1) to [bend left] (V2) (V2) to [bend left] (v3)(v3) to [bend left] (V1) (v1)--(v2)(v2)--(v3)(v3)--(v1)
(k3)to [bend right] (k2)(k2)--(k1)--(k3)
(v1) to [bend left](k2)(k2)to [bend left](v3)(v3)to [bend left](k1)(k1)to [bend left](v2)(v2) to [bend left](k3)(k3)to [bend left](v1)
;
\draw (V3) to [bend right=17] (r1) (r1) to [bend right=17] (V2);
\draw (V3) to [bend left=15] (r2) (r2) to [bend left=22] (V1);
\draw (V2) to [bend right=15] (r3) (r3) to [bend right=22] (V1);
\draw [dotted, thick] (0,-.75) to (2.4,-4.75) to (-2.4,-4.75) to(0,-.75);
\end{tikzpicture}
\caption[Construction~\ref{con:4}, a $K_{\ell,m,m}$-saturated subgraph of $K_{n,n,n}$]{\label{figure:con4}Construction~\ref{con:4}: A $K_{\ell,m,m}$-saturated subgraph of $K_{n,n,n}$. 
Solid lines denote complete joins between sets, and dotted lines denote edges that have been removed.  The lines marked with ``$(\ell-m)$-regular" represent the triangle-free tripartite graph used in Construction~\ref{con:4}.}
\end{figure}

It is possible to modify Construction~\ref{con:4} so that the edges removed induce $P_4$ rather than $K_3$ as in Construction~\ref{con:2} (for instance, remove $\{v_i^1v_{i+1}^1,v_i^2v_{i+2}^1,v_{i+1}^1v_{i+2}^1\}$).
Since we do not prove that these constructions are best possible nor that they characterize the $K_{\ell,m,m}$-saturated subgraphs of $K_{n,n,n}$ of minimum size, we do not include this variant as a separate construction.

We now present a $K_{\ell,m,p}$-saturated subgraph of $K_{n,n,n}$ for $m>p$.

\begin{construction}
\label{con:5}
Let $\ell$, $m$, and $p$ be positive integers such that $\ell\ge m>p$ and let $n\ge \ell+\FL{\frac{\ell-m}{2}}-1$.
For each $j\in [3]$, let $S_j$ be an $(m-1)$-vertex subset of $V_j$ and join $S_i$ to $V_{i+1}$ and $V_{i+2}$.
Let $t=\FL{\frac{\ell-m}{2}}$, and for each $j\in [3]$ let $T_i$ be a $t$-vertex subset of $V_j\setminus  S_j$.
For all $i\in[3]$, completely join $T_i$ to $T_{i+1}$.
For each $i\in [3]$, let $(V_i\cup V_{i+1})\setminus (S_i\cup S_{i+1}\cup T_i\cup T_{i+1})$ induce an $(\ell-m)$-regular bipartite graph.  
\end{construction}

Constructions~\ref{con:4} and~\ref{con:5} yield the following two theorems.
The proofs of these theorems follow almost immediately from the proofs of Theorems~\ref{thm:lmmupperbound} and~\ref{thm:lmpupperbound}, respectively, and therefore we omit them.

\begin{theorem}
Let $\ell$ and $m$ be positive integers such that $\ell\ge m$ and let $$n\ge \max\left\{\ell+2,3\ell+\FL{\frac{\ell-m}{2}}-2m-2\right\}.$$
The graph from Construction~\ref{con:4} is a $K_{\ell,m,m}$-saturated subgraph of $K_{n,n,n}$, and thus 
\[\sat(K_{n,n,n},K_{\ell,m,p})\le 3(\ell+m)n-3\left(\ell-m-\FL{\frac{\ell-m}{2}}\right)\FL{\frac{\ell-m}{2}}-3\ell m-3.\]
\end{theorem}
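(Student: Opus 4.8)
The plan is to verify that Construction~\ref{con:4} is $K_{\ell,m,m}$-saturated and then to count its edges. Because Construction~\ref{con:4} is obtained from Construction~\ref{con:1} by only \emph{thinning} the bipartite graphs spanned by $\bigcup_i(V_i\setminus(S_i\cup T_i))$ and keeping the joins $S_i\to V_{i+1},V_{i+2}$ and $T_i\to T_{i+1}$ intact, the two cases in the proof of Theorem~\ref{thm:lmmupperbound} carry over essentially verbatim, so I would phrase the argument as a short reduction to that proof rather than redoing it from scratch.

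First I would check $K_{\ell,m,m}$-freeness. The key observation is that $G-(S_1\cup S_2\cup S_3)$ is triangle-free: within it, the only edges are those of the thinned bipartite graph on $\bigcup_i(V_i\setminus(S_i\cup T_i))$ (triangle-free by construction) and the joins $T_i\to T_{i+1}$, and these two pieces share no vertex, so no triangle arises. Hence for any $v\in V_i\setminus S_i$, $G[N(v)]$ contains no edge both of whose endpoints lie outside $\bigcup_j S_j$; since $|S_j|=m$ for each $j$, a copy of $K_{\ell,m,m}$ in $G$ would force two of its partite classes to lie inside $S_i\cup S_j$ for some $i\ne j$, i.e.\ to coincide with $S_i$ and $S_j$, but $G[S_i\cup S_{i+1}]$ is not complete bipartite (an edge $v_i^1v_{i+1}^1$ was removed). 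This rules out $K_{\ell,m,m}$.

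Next I would check saturation: given a nonedge $e=uv$, show $G+e\supseteq K_{\ell,m,m}$, splitting into the same two cases as before. If $e$ joins $S_i$ to $S_{i+1}$ (the only missing edges among the $S$'s are the three removed ones), then $\{v_{i+2}^1,\dots,v_{i+2}^\ell\}\cup S_i\cup S_{i+1}\cup\{e\}$ gives the copy, using that $V_{i+2}$ is fully joined to $S_i$ and $S_{i+1}$. If $e$ joins $v_j^a\in V_j\setminus S_j$ to $v_i^b\in V_i\setminus S_i$ with $i<j$, the subtlety is that $v_j^a$ now may have fewer than $\ell$ neighbors in $V_i$ — but it has exactly $\ell-m$ neighbors in $V_i\setminus(S_i\cup T_i)$, plus all of $S_i$ ($m$ vertices), plus, if $v_j^a\in T_j$, all $t=\lfloor(\ell-m)/2\rfloor$ vertices of $T_i$, and if $v_j^a\notin T_j$ one still has $|N_i(v_j^a)|\ge m+(\ell-m)=\ell$; in either case, writing $x_i\in S_i$, $x_j\in S_j$ for the two vertices incident to removed edges, one takes $(N_i(v_j^a)\cup\{v_i^b\}\setminus\{x_i\})$, $(S_j\cup\{v_j^a\}\setminus\{x_j\})$, and $S_k$ (as in Case~2 of Theorem~\ref{thm:lmmupperbound}), each of size $\ell$, $m$, $m$ respectively, noting $S_i\setminus x_i$ is completely joined to $S_j\setminus x_j$. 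The only real care is confirming that even after thinning and even when $v_j^a\in T_j$ one still selects a neighborhood of size $\geq\ell$ in $V_i$ and that $v_i^b$ is genuinely new; the hypothesis $n$ large guarantees the ranges used don't overlap the removed edges.

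Finally I would count edges. The joins $S_i\to V_{i+1},V_{i+2}$ contribute $2m(3n)=6mn$, minus the overcount $3m^2$ for the pairs $S_i\cap S_j$... more carefully, following the edge-count bookkeeping of Theorem~\ref{thm:lmmupperbound} gives the $3(\ell+m)n-3\ell m$ main term, except that the three $(\ell-m)$-regular bipartite graphs on $n$-vertex sides are each replaced by: a complete join $T_i\to T_{i+1}$ ($t^2$ edges) together with an $(\ell-m)$-regular bipartite graph on the remaining $n-m-t$ vertices per side ($(\ell-m)(n-m-t)$ edges), for a total of $t^2+(\ell-m)(n-m-t)$ in place of $(\ell-m)n$; the difference per pair is $(\ell-m)n-\bigl(t^2+(\ell-m)(n-m-t)\bigr)=(\ell-m)(m+t)-t^2$, and over three pairs the saving relative to $(\ell-m)m$-per-pair is $3\bigl((\ell-m)t-t^2\bigr)=3\bigl(\ell-m-\lfloor\tfrac{\ell-m}{2}\rfloor\bigr)\lfloor\tfrac{\ell-m}{2}\rfloor$. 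Subtracting this saving and the $-3$ for the removed triangle from the Construction~\ref{con:1} count $2m(n_1+n_2+n_3)+(\ell-m)(n_2+2n_3)-3\ell m-3$ specialized to $n_1=n_2=n_3=n$, which equals $6mn+3(\ell-m)n-3\ell m-3=3(\ell+m)n-3\ell m-3$, yields exactly $3(\ell+m)n-3\bigl(\ell-m-\lfloor\tfrac{\ell-m}{2}\rfloor\bigr)\lfloor\tfrac{\ell-m}{2}\rfloor-3\ell m-3$. The main obstacle is purely administrative: making sure the degree bookkeeping in the saturation check is valid when $v_j^a\in T_j$ (so that one does not accidentally rely on $T_i$-neighbors that are needed elsewhere), and that the arithmetic translating the three regular graphs into the stated closed form is done consistently; everything else is a transcription of Theorem~\ref{thm:lmmupperbound}.
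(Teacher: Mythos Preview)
Your overall plan—reduce to the proof of Theorem~\ref{thm:lmmupperbound} and then count edges—is exactly what the paper has in mind (the paper omits the proof, saying it follows ``almost immediately'' from that theorem), and your edge count is correct. However, two of the steps you transcribe from Theorem~\ref{thm:lmmupperbound} do \emph{not} carry over verbatim, and both failures come from the same source: in Construction~\ref{con:4} the sets $T_1,T_2,T_3$ are pairwise completely joined.

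First, your freeness argument asserts that $G-(S_1\cup S_2\cup S_3)$ is triangle-free, on the grounds that the $(\ell-m)$-regular piece and the $T_i\!\to\!T_{i+1}$ joins ``share no vertex.'' But vertex-disjointness of the two pieces does not prevent triangles \emph{inside} the second piece: whenever $t\ge 1$ (i.e.\ $\ell\ge m+2$), any $u_1\in T_1$, $u_2\in T_2$, $u_3\in T_3$ form a triangle in $G-(S_1\cup S_2\cup S_3)$. So the claim is false, and with it the deduction that $G[N(v)]$ has no edge avoiding $\bigcup_j S_j$. The fix is to argue directly: a vertex of $T_j$ has only $m+t<\ell$ neighbours in each other part, so no $T$-vertex can lie in an $m$-class of a putative $K_{\ell,m,m}$; then if both $m$-classes meet $V\setminus(S\cup T)$, any two such vertices together with any vertex of the $\ell$-class would force a triangle in the triangle-free piece (or adjacency to $T$, which is impossible), giving $|A|\le|S_i|=m<\ell$; hence one $m$-class equals some $S_j$, and the removed edges at $v_1^1,v_2^1,v_3^1$ finish the contradiction.

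Second, in Case~2 of saturation you claim that $v_j^a$ has $\ell-m$ neighbours in $V_i\setminus(S_i\cup T_i)$ regardless of whether $v_j^a\in T_j$. This is false: if $v_j^a\in T_j$ then its only neighbours in $V_i$ are $S_i\cup T_i$, so $|N_i(v_j^a)|=m+t<\ell$, and your set $N_i(v_j^a)+v_i^b-x_i$ is too small to be the $\ell$-class. The repair is simple: since $T_i$ and $T_j$ are completely joined, a nonedge between $V_i\setminus S_i$ and $V_j\setminus S_j$ has at least one endpoint outside $T$; use \emph{that} endpoint (say $v_i^b\in V_i\setminus(S_i\cup T_i)$) to build the $\ell$-class as $N_j(v_i^b)+v_j^a-x_j$, and the rest of the Theorem~\ref{thm:lmmupperbound} template goes through.
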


\begin{theorem}\label{thrm:klm}
Let $\ell$, $m$, and $p$ be positive integers such that $\ell\ge m>p$ and let $n\ge \ell+\FL{\frac{\ell-m}{2}}-1$.
The graph from Construction~\ref{con:5} is a $K_{\ell,m,p}$-saturated subgraph of $K_{n,n,n}$, and thus 
\[\sat(K_{n,n,n},K_{\ell,m,p})\le 3(\ell+m-2)n-3(m-1)(\ell-1)+3\FL{\frac{\ell-m}{2}}^2-3(\ell-m)\FL{\frac{\ell-m}{2}}.\]
\end{theorem}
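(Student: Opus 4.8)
The plan is to follow the proof of Theorem~\ref{thm:lmpupperbound} almost word for word; the only new features of Construction~\ref{con:5} relative to Construction~\ref{con:3} are the cliques $T_i$ and the fact that the bipartite graphs between the ``small'' parts are $(\ell-m)$-regular rather than merely of maximum degree $\ell-m$. Write $G$ for the graph of Construction~\ref{con:5} and put $t=\fl{\frac{\ell-m}{2}}$. I would prove in turn that $G$ is $K_{\ell,m,p}$-free, that $G+e$ contains $K_{\ell,m,p}$ for every non-edge $e$ of $K_{n,n,n}$, and that $|E(G)|$ equals the stated expression. For $K_{\ell,m,p}$-freeness, the key observation is that every vertex of $(V_1\cup V_2\cup V_3)\setminus(S_1\cup S_2\cup S_3)$ has at most $\ell-1$ neighbors in each of the other two partite sets: if $v\in T_i$, its neighbors in $V_{i+1}$ lie in $S_{i+1}\cup T_{i+1}$, a set of size $(m-1)+t\le\ell-1$ (since $m\le\ell$ and $2t\le\ell-m$), and likewise for $V_{i+2}$; if $v\in V_i\setminus(S_i\cup T_i)$, its neighbors in $V_{i+1}$ are exactly the $m-1$ vertices of $S_{i+1}$ together with exactly $\ell-m$ vertices from the $(\ell-m)$-regular bipartite graph, hence exactly $\ell-1$ of them, and similarly for $V_{i+2}$. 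Granting this, a copy of $K_{\ell,m,p}$ in $G$ would contain a copy of $K_{\ell,m}$ whose part of size $m$ lies in some $V_i$ and whose part of size $\ell$ lies in some $V_j$ with $j\ne i$; every vertex of the size-$m$ part then has at least $\ell$ neighbors in $V_j$ and so must belong to $S_i$, which is impossible because $|S_i|=m-1<m$.

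For the saturation step, let $e=v_iv_j$ be a non-edge with $v_i\in V_i$, $v_j\in V_j$, and let $k$ be the third index. Neither endpoint lies in its $S$-set, and because $T_i$ is completely joined to $T_j$ the two endpoints cannot both lie in their $T$-sets; hence, relabelling if necessary, $v_j\in V_j\setminus(S_j\cup T_j)$, so by the computation above $|N_i(v_j)|=(m-1)+(\ell-m)=\ell-1$. Exactly as in the proof of Theorem~\ref{thm:lmpupperbound}, $G+e$ then contains $K_{\ell,m,m-1}$ on the vertex set $(N_i(v_j)+v_i)\cup(S_j+v_j)\cup S_k$: the edge $e$ supplies the only missing adjacency, between $v_j$ and $v_i$, and every other required adjacency comes from a complete join incident to $S_j$ or to $S_k$. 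Since $p\le m-1$, this contains a copy of $K_{\ell,m,p}$.

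For the edge count I would sum three contributions: $6(m-1)n-3(m-1)^2$ edges incident to $S_1\cup S_2\cup S_3$ (each $(m-1)$-set is joined to the $2n$ vertices outside its own part, and the three pairwise joins among the $S_i$ get counted twice in doing so); $3t^2$ edges from the joins $T_iT_{i+1}$; and $3(\ell-m)(n-m+1-t)$ edges from the three $(\ell-m)$-regular bipartite graphs on pairs of sets of size $n-(m-1)-t$. The coefficient of $n$ is then $3\bigl(2(m-1)+(\ell-m)\bigr)=3(\ell+m-2)$; the constant terms from the $S$-part and the regular part combine to $-3(m-1)(\ell-1)$; and the remaining $3t^2-3(\ell-m)t$ is exactly $3\fl{\frac{\ell-m}{2}}^2-3(\ell-m)\fl{\frac{\ell-m}{2}}$. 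This gives the claimed bound.

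The main obstacle, such as it is, is the relabelling in the saturation step: one has to notice that a non-edge can never have both ends in $T$-sets, which is what guarantees $|N_i(v_j)|=\ell-1$ on at least one of the two sides and hence allows the $K_{\ell,m,m-1}$ to be built exactly as in the $m>p$ case already proved. Everything else is routine bookkeeping — keeping track of which vertices see the $T_i$'s versus the regular bipartite parts in the freeness step, checking that the hypothesis $n\ge\ell+\fl{\frac{\ell-m}{2}}-1$ leaves room for the regular bipartite graphs and for the configuration exhibited in the saturation step, and the arithmetic of the edge count.
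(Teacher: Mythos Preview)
Your proposal is correct and follows exactly the approach the paper intends: the paper omits the proof entirely, noting only that it ``follows almost immediately from the proof of Theorem~\ref{thm:lmpupperbound}.'' You have carried out precisely that adaptation, correctly handling the one new wrinkle---that a non-edge cannot have both endpoints in $T$-sets, so at least one endpoint lies in $V_j\setminus(S_j\cup T_j)$ and therefore has exactly $\ell-1$ neighbors on the other side---and your edge count is accurate.
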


\begin{figure}[h]
\centering
\begin{tikzpicture}[x=.6cm,y=.6cm]
\path
(0,3.75) node [shape=ellipse,minimum width=1.5cm,minimum height=.75cm,draw](V1){$V_1\backslash (S_1\cup K_1)$}
(0,0) node [shape=circle,minimum width=.75 cm,draw](v1){$S_1$}
(-9,-5.196) node [shape=ellipse,minimum width=1.5cm,minimum height=.75cm,draw](V3){$V_3\backslash (S_3\cup K_2)$}
(-3,-5.196) node [shape=circle,minimum width=.75 cm,draw](v3){$S_3$}
(9,-5.196) node [shape=ellipse,minimum width=1.5cm,minimum height=.75cm,draw](V2){$V_2\backslash (S_2\cup K_3)$}
(3,-5.196) node [shape=circle,minimum width=.75 cm,draw](v2){$S_2$}
(-4.75,-5.196) node [shape=circle,minimum width=.75 cm,draw](k3){$T_3$}
(4.75,-5.196) node [shape=circle,minimum width=.75 cm,draw](k2){$T_2$}
(0,1.75) node [shape=circle,minimum width=.75 cm,draw](k1){$T_1$}
(0,-7.75) node (r1){$(\ell-m)$-regular}
(-8.9,-1.5) node (r2){$(\ell-m)$-regular}
(8.9,-1.5) node (r3){$(\ell-m)$-regular};
\draw(V1) to [bend left] (v2)(v2) to [bend left] (V3) (V3) to [bend left] (v1) (v1) to [bend left] (V2) (V2) to [bend left] (v3)(v3) to [bend left] (V1) (v1)--(v2)(v2)--(v3)(v3)--(v1)
(k3)to [bend right] (k2)(k2)--(k1)--(k3)
(v1) to [bend left](k2)(k2)to [bend left](v3)(v3)to [bend left](k1)(k1)to [bend left](v2)(v2) to [bend left](k3)(k3)to [bend left](v1)
;
\draw (V3) to [bend right=17] (r1) (r1) to [bend right=17] (V2);
\draw (V3) to [bend left=15] (r2) (r2) to [bend left=22] (V1);
\draw (V2) to [bend right=15] (r3) (r3) to [bend right=22] (V1);
\end{tikzpicture}
\caption[Construction~\ref{con:5}, a $K_{\ell,m,p}$-saturated subgraph of $K_{n,n,n}$]{Construction~\ref{con:5}: A $K_{\ell,m,p}$-saturated subgraph of $K_{n,n,n}$.  Solid lines denote complete joins between sets.
The lines marked with ``$(\ell-m)$-regular" represent the $(\ell-m)$-regular bipartite graphs used in Construction~\ref{con:5}.}
\end{figure}
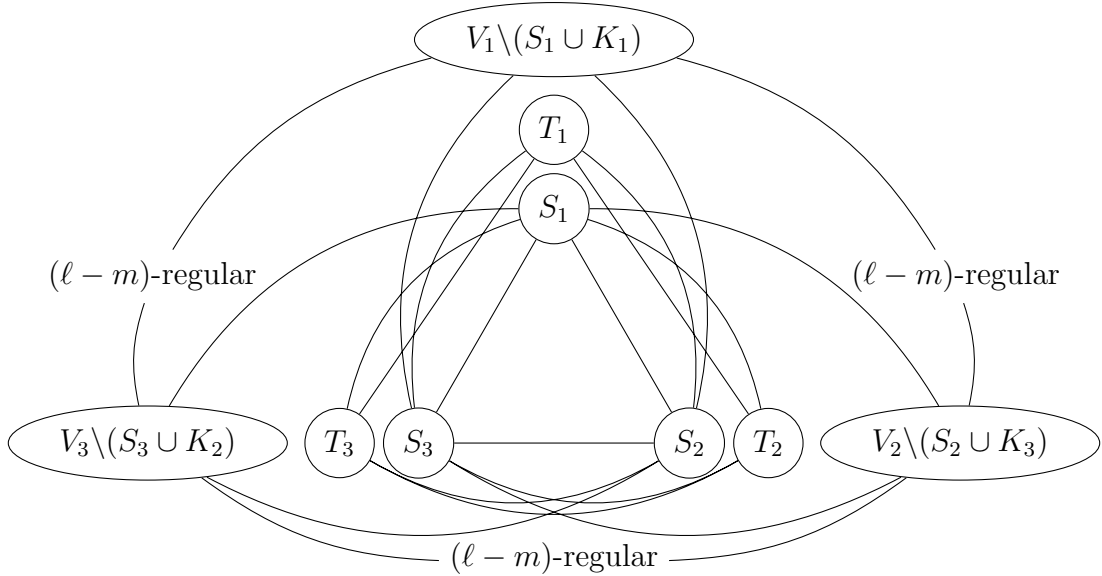

\section{The saturation numbers of $K_{\ell,\ell,\ell}$ and $K_{\ell,\ell,\ell-1}$}\label{Sec3}

In this section we prove the following two theorems on saturation numbers in tripartite graphs.

\begin{theorem}\label{thm:lll}
Let $\ell$ be a positive integer.
If $n_1$, $n_2$, and $n_3$ are positive integers  such that $n_1\ge n_2\ge n_3\ge 32\ell^3+40\ell^2+11\ell$, then
$$sat(K_{n_1,n_2,n_3},K_{\ell,\ell,\ell})=2\ell (n_1+n_2+n_3)-3\ell^2-3.$$
Furthermore, the graphs from Constructions~\ref{con:1} and~\ref{con:2} are the only $K_{\ell,\ell,\ell}$-saturated subgraphs of $K_{n_1,n_2,n_3}$ with this number of edges.
\end{theorem}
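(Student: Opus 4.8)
The plan is to establish the lower bound $\sat(K_{n_1,n_2,n_3},K_{\ell,\ell,\ell})\ge 2\ell(n_1+n_2+n_3)-3\ell^2-3$ and simultaneously extract enough structural information to force any extremal graph to be one of the two constructions; the matching upper bound is already provided by Theorem~\ref{thm:lmmupperbound}. Fix a $K_{\ell,\ell,\ell}$-saturated subgraph $G$ of $K_{n_1,n_2,n_3}$ with $|E(G)|\le 2\ell(n_1+n_2+n_3)-3\ell^2-3$; I want to show equality and identify $G$. The first step is to record what saturation gives locally: for every nonedge $v_i^av_j^b$, the graph $G+v_i^av_j^b$ contains $K_{\ell,\ell,\ell}$, so there are sets $A\subseteq V_i$, $B\subseteq V_j$, $C\subseteq V_k$ of size $\ell$ (where $k$ is the third index) with $v_i^a\in A$, $v_j^b\in B$, inducing a complete tripartite graph in $G$ except for the single missing edge $v_i^av_j^b$. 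In particular $v_i^a$ is completely joined to $C$ and $B\setminus\{v_j^b\}$, and $v_j^b$ to $C$ and $A\setminus\{v_i^a\}$; and $C$ induces (with $A$ and $B$) a $K_{\ell,\ell}$ on each side, so in particular the vertices of $C$ have at least $\ell$ neighbors in each of $V_i$ and $V_j$.

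The heart of the argument is a degree/counting analysis. Call a vertex $v\in V_i$ \emph{heavy} if it has at least $\ell$ neighbors in both $V_{i+1}$ and $V_{i+2}$, i.e.\ it can serve as a vertex of a copy of $K_{\ell,\ell,\ell}$; otherwise call it \emph{light}. A light vertex $v\in V_i$ has $\deg_G(v)$ small on at least one side, but the saturation condition forces that the "deficient'' side be compensated: for a nonedge at $v$ on the deficient side, $v$ must land in a copy of $K_{\ell,\ell,\ell}$, which is impossible unless $v$ is actually adjacent to all but few vertices on that side — so in fact light vertices must be nearly complete to at least one partite set. A careful case analysis (the thresholds $32\ell^3+40\ell^2+11\ell$ are there precisely to make the pigeonhole go through) should show: there is a set $S_i\subseteq V_i$ of at most $\ell$ "special'' vertices per part, completely or almost-completely joined to the other two parts, such that every vertex of $V_i\setminus S_i$ has at most $\ell-1$ neighbors in each of $V_{i+1}$ and $V_{i+2}$ outside $S_{i+1},S_{i+2}$, and moreover (using that $G$ is $K_{\ell,\ell,\ell}$-free while $G+e$ is not) the subgraph on $\bigcup_i(V_i\setminus S_i)$ is triangle-free and every vertex of $V_j\setminus S_j$ has exactly $\ell-m=0$... — more precisely, tracking the $K_{\ell,\ell,\ell}$ produced by each nonedge shows that vertices outside the $S_i$ have \emph{exactly} the right number of neighbors, namely each $v\in V_i\setminus S_i$ is completely joined to $S_{i+1}\cup S_{i+2}$ and has no other neighbors, except that $|S_i|$ must equal $\ell$ and the three "corner'' edges among $\bigcup S_i$ are missing. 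Summing degrees then yields exactly $2\ell(n_1+n_2+n_3)-3\ell^2-3$ edges, and the rigidity of the case analysis pins $G$ down to Construction~\ref{con:1} or~\ref{con:2} (the only freedom being which three edges inside $S_1\cup S_2\cup S_3$ are deleted, giving precisely these two families up to the isomorphism already built into the constructions).

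I expect the main obstacle to be the light-vertex analysis: ruling out configurations where a vertex outside the nascent $S_i$'s has a moderate (between $1$ and $\ell-1$) number of neighbors on \emph{both} sides and still participates, together with special vertices, in the copies of $K_{\ell,\ell,\ell}$ forced by its nonedges. The danger is an "accumulation'' of such medium-degree vertices that keeps the edge count low without matching either construction; this is exactly where the cubic-in-$\ell$ lower bound on $n_3$ is spent, via a pigeonhole argument showing that two special vertices would have to share too many common neighbors, creating a forbidden $K_{\ell,\ell,\ell}$ inside $G$ itself. Once every vertex is cleanly classified and the sets $S_i$ are shown to have size exactly $\ell$ with the $V_i\setminus S_i$ vertices having degree exactly $2\ell$, the edge-count and the characterization both drop out; verifying that no copy of $K_{\ell,\ell,\ell}$ hides among the special vertices (forcing exactly the three missing "corner'' edges, arranged as a triangle or a path $P_4$) completes the uniqueness claim. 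A subtlety to handle carefully is the interaction between the three parts of possibly very different sizes $n_1\ge n_2\ge n_3$: the asymmetry in the constructions (the extra $(\ell-m)(n_2+2n_3)$ term vanishes here since $\ell=m$) means this particular theorem is cleaner than the general $K_{\ell,m,m}$ case, and I would exploit $\ell=m$ throughout to keep the light-vertex bookkeeping manageable.
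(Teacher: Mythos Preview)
Your plan is more convoluted than the paper's and, as you yourself anticipate, has a genuine gap at the ``light-vertex analysis'' step. The paper avoids this entire global heavy/light classification by first proving two counting lemmas (Lemmas~\ref{lem:deltai} and~\ref{lem:mindeg}) whose sole purpose is to show that if $\delta_i>2\ell$ for \emph{any} $i$, then already $|E(G)|\ge 2\ell(n_1+n_2+n_3)$. This is where the cubic threshold $32\ell^3+40\ell^2+11\ell$ is actually spent---not on a pigeonhole about common neighbours as you guessed, but on a cascading bound: first cap $\delta_1$, use that to cap $\delta_2$, then invoke the lemma to force $\delta_3=2\ell$, and feed that back. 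Once $\delta_1=\delta_2=\delta_3=2\ell$, the argument is very short: pick \emph{one} vertex $v_i$ of degree $2\ell$ in each part; saturation forces $N_{i+1}(v_i)$ to be completely joined to $V_{i+2}\setminus N_{i+2}(v_i)$, whence $N_i(v_{i+1})=N_i(v_{i+2})=:S_i$ with $|S_i|=\ell$, and counting the forced edges yields the bound immediately.

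The specific danger you flag---an ``accumulation of medium-degree vertices''---is exactly what your sketch does not handle, and your proposed remedy (two special vertices sharing too many common neighbours, creating a forbidden $K_{\ell,\ell,\ell}$) is not the mechanism that works here. You never need to classify \emph{all} vertices or prove that every vertex in $V_i\setminus S_i$ has degree exactly $2\ell$; the lower bound and the identification of the $S_i$'s come from just three witness vertices. Your remark that vertices outside the $S_i$'s have ``exactly $\ell-m=0$'' extra neighbours also shows the general construction bleeding into the $\ell=m$ case: in the extremal graphs for $K_{\ell,\ell,\ell}$ there are simply no edges outside $S_1\cup S_2\cup S_3$ except those incident to it, and the lower-bound argument never needs to establish triangle-freeness of any residual graph. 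For the characterization, the paper's endgame matches yours (the three missing edges among the $S_i$'s must form a $K_3$ or a $P_4$), but the paper gets there by testing the single nonedge $v_iv_{i+1}$ against the known structure, not by a general ``no hidden $K_{\ell,\ell,\ell}$'' check.
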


\begin{theorem}\label{thm:lll-1}
Let $\ell$ be a positive integer.
If $n_1$, $n_2$, and $n_3$ are positive integers  such that $n_1\ge n_2\ge n_3\ge 32(\ell-1)^3+40(\ell-1)^2+11(\ell-1)$, then
$$sat(K_{n_1,n_2,n_3},K_{\ell,\ell,\ell-1})=2(\ell-1)(n_1+n_2+n_3)-3(\ell-1)^2.$$
Furthermore, the graph from Construction~\ref{con:3} is the unique $K_{\ell,\ell,\ell-1}$-saturated subgraph of $K_{n_1,n_2,n_3}$ with this number of edges.
\end{theorem}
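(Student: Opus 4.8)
The plan is to establish the lower bound $\sat(K_{n_1,n_2,n_3},K_{\ell,\ell,\ell-1})\ge 2(\ell-1)(n_1+n_2+n_3)-3(\ell-1)^2$ and simultaneously derive the uniqueness claim; the matching upper bound is already supplied by Theorem~\ref{thm:lmpupperbound} applied with $m=\ell$ and $p=\ell-1$, whose edge count simplifies to exactly $2(\ell-1)(n_1+n_2+n_3)-3(\ell-1)^2$. So let $G$ be a $K_{\ell,\ell,\ell-1}$-saturated subgraph of $K_{n_1,n_2,n_3}$ and suppose for contradiction that $|E(G)|$ is at most the claimed value. The first step is to understand the minimum degrees $\delta_i=\delta_i(G)$. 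Saturation forces that whenever $v_i^av_j^b\notin E(G)$, the graph $G+v_i^av_j^b$ contains a copy of $K_{\ell,\ell,\ell-1}$ using that edge; in particular both endpoints then lie in partite classes of size $\ell$ or $\ell-1$ inside that copy, which forces each of $v_i^a$ and $v_j^b$ to have already had many neighbors in the third class and a near-complete neighborhood structure. The aim of this step is to show $\delta_i(G)\ge 2(\ell-1)$ for every $i$ whose vertices have any non-neighbor, and more precisely that almost every vertex has degree exactly $2(\ell-1)$, with the few exceptions concentrated on a bounded ``core'' set $\bigcup S_i$ of $\le 3(\ell-1)$ vertices of high degree.

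The second step is a structural dichotomy for an arbitrary vertex $v\in V_i$. Because adding any edge at $v$ must complete a $K_{\ell,\ell,\ell-1}$, either $v$ ends up in one of the two large sides (so $v$ must already see $\ell-1$ vertices of one neighboring class and $\ell-1$ of the other, i.e. $v$ lies in a ``$K_{\ell-1,\ell-1}$-rich'' spot), or $v$ ends up in the side of size $\ell-1$; analyzing the second possibility, together with the fact that $G$ itself is $K_{\ell,\ell,\ell-1}$-free, should pin down that the vertices of degree $>2(\ell-1)$ form exactly the sets $S_i$ of Construction~\ref{con:3}, each completely joined to the other two classes, and that every vertex outside $\bigcup S_i$ has exactly $\ell-1$ neighbors in each other class, with $|S_i|=m-1=\ell-1$. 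This is where I would lean on $n_3$ being enormous (the cubic bound $32(\ell-1)^3+\cdots$): a counting/averaging argument says most vertices have the minimum degree, and then a local surgery argument (if some vertex in $V_i\setminus S_i$ had $\ge \ell$ neighbors in both $V_{i+1}$ and $V_{i+2}$ one already finds $K_{\ell,\ell,\ell-1}$ using $\le \ell-1$ core vertices of the third class) forces the exact degree $\ell-1$ off the core.

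The third step is to nail down that the bipartite graph induced between $V_i\setminus S_i$ and $V_j\setminus S_j$ is exactly $(\ell-m)=0$-regular — wait, here $\ell-m=0$, so in fact there should be \emph{no} edges between the non-core parts at all, matching Construction~\ref{con:3} with $m=\ell$. Concretely: if $v\in V_i\setminus S_i$ has a neighbor $w\in V_j\setminus S_j$, then since $v$ sees only the $\ell-1$ core vertices of $V_k$ and $w$ sees only the $\ell-1$ core vertices of $V_k$, one checks the saturation condition at a non-edge of $v$ (such a non-edge exists since $v$ is not complete to anything) cannot be met, a contradiction. Having forced $E(G)$ to coincide edge-for-edge with Construction~\ref{con:3}, the edge count is exact, giving the lower bound and uniqueness together. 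The main obstacle I anticipate is the second step: controlling the interaction between the low-degree ``periphery'' and the high-degree ``core'' rigorously, in particular ruling out alternative cores (e.g. sets of size other than $\ell-1$, or not completely joined, or asymmetric among the three classes) and handling the boundary cases where a would-be copy of $K_{\ell,\ell,\ell-1}$ wants to use core vertices on the short side — this is exactly the kind of case analysis where the large lower bound on $n_3$ is consumed, and getting the bookkeeping on which class plays the size-$(\ell-1)$ role is delicate.
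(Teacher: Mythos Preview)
Your overall architecture is right, and you correctly note $\delta_i\ge 2(\ell-1)$ and that the target structure is Construction~\ref{con:3} with $m=\ell$ (so the non-core bipartite pieces are empty). The genuine gap is at your Steps~1--2: you propose an ``averaging'' argument to show most vertices have degree exactly $2(\ell-1)$ and that the rest form a bounded core, but the degree bound $\delta\ge 2(\ell-1)$ only yields $|E(G)|\ge (\ell-1)(n_1+n_2+n_3)$, and under the hypothesis $|E(G)|\le 2(\ell-1)\sum n_i$ the available ``excess'' degree is of order $n$, not $O(\ell)$, so averaging neither bounds the core nor forces $\delta_i=2(\ell-1)$ in every class. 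The paper closes this gap with Lemmas~\ref{lem:deltai} and~\ref{lem:mindeg}: each nonneighbor of a minimum-degree vertex $v_i$ must share $\ell-1$ common neighbors with $v_i$, which manufactures many $V_{i+1}$--$V_{i+2}$ edges on top of the roughly $\delta_i n_i$ edges meeting $V_i$; combining these shows that $\delta_i>2(\ell-1)$ for any single $i$ already gives $|E(G)|\ge 2(\ell-1)\sum n_i$. This structural edge-count is where the cubic bound on $n_3$ is actually consumed, not in a local surgery on the periphery as you suggest.

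Once $\delta_i=2(\ell-1)$ for all $i$, the paper's finish is much shorter than your Steps~2--3: take \emph{one} vertex $v_i$ of degree $2(\ell-1)$ in each class, show its $\ell-1$ neighbors in each other class are completely joined to the complement (else some non-edge at $v_i$ fails saturation), deduce $N_i(v_{i+1})=N_i(v_{i+2})=:S_i$ with $|S_i|=\ell-1$, and check that $S_i$ and $S_{i+1}$ are completely joined. This exhibits Construction~\ref{con:3} as a subgraph of $G$; since that construction is itself saturated, $G$ can contain no further edge. Your Step~3 is then unnecessary---and as written it is circular, since the premise ``$v$ sees only the $\ell-1$ core vertices of $V_k$'' is precisely what the argument is meant to establish.
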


Though $K_{\ell,\ell,\ell}$ and $K_{\ell,\ell,\ell-1}$ correspond to different constructions from Section~\ref{sec:con}, they are both of the form $K_{\ell,\ell,m}$ for $\ell\ge m$.
Thus we begin by establishing some common lemmas on the number of edges in $K_{\ell,\ell,m}$-saturated subgraphs of $K_{n_1,n_2,n_3}$ when $m\ge1$.

\begin{lemma}\label{lem:deltai}
Let $i\in[3]$ and assume that $n_i\ge (3m+1)(\delta_{i+1}+\delta_{i+2})+2m^2+m$. 
If $G$ is a $K_{\ell,\ell,m}$-saturated subgraph of $K_{n_1,n_2,n_3}$ such that $\delta_i>2m$, then $|E(G)|\ge 2m(n_1+n_2+n_3)$.
\end{lemma}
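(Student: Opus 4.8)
The plan is to show that if $\delta_i > 2m$, then in fact \emph{every} vertex has degree at least $2m$, which immediately gives $|E(G)| = \frac12\sum_v d(v) \ge 2m(n_1+n_2+n_3)$. The vertices in $V_i$ are already handled by hypothesis, so the real work is bounding $\delta_{i+1}$ and $\delta_{i+2}$ from below. Fix a vertex $v \in V_{i+1}$ (the argument for $V_{i+2}$ is symmetric). I would first observe that $v$ cannot be isolated from $V_i$: indeed, by saturation, for a large set of non-neighbors $u \in V_i$ of $v$ (here the lower bound on $n_i$ guarantees $v$ has many non-neighbors, since its degree into $V_i$ is small if we are trying to derive a contradiction), adding $uv$ creates a $K_{\ell,\ell,m}$, and $v$ must lie in that copy; examining which partite class $v$ occupies in the new copy and the fact that the copy uses $m$ or $\ell$ vertices from $V_{i+1}$ forces $v$ to have many neighbors in the two other partite sets — in particular, $v$ has at least $\ell$ neighbors in $V_i \cup V_{i+2}$ for each such $u$, and more importantly these neighborhoods are constrained.

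The key step is the following counting/pigeonhole argument. Suppose for contradiction that some $v \in V_{i+1}$ has $d(v) < 2m$. For each non-neighbor $u$ of $v$ in $V_i$, the copy $K_u$ of $K_{\ell,\ell,m}$ created in $G+uv$ contains $uv$; since $K_u - uv \subseteq G$, the copy $K_u$ must place $v$ and $u$ in different partite classes of the $K_{\ell,\ell,m}$, and every other vertex of $K_u$ is a common neighbor (in $G$) of the appropriate parts. Because $v$ has fewer than $2m$ neighbors total, the part of $K_u$ adjacent to $v$ (which has size $\ell$ or $m$, hence at least $m$) must be drawn from a vertex set of size $<2m$ lying in $V_i \cup V_{i+2}$. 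So there are at most $\binom{2m}{m}$ possibilities — really at most $\binom{<2m}{\ge m}$, a constant depending only on $m$ — for the ``side adjacent to $v$'' of the copy, and for each fixed choice, all of $V_{i+1}$-vertices in the copy (other than $v$) are common neighbors of that small fixed set. A fixed $m$-subset (or $\ell$-subset) $W \subseteq N(v)$ has a bounded common neighborhood: I would show $|N_i(W)| + |N_{i+2}(W)|$ is small — at most something like $\delta_{i+1}+\delta_{i+2}$ or a bounded multiple thereof — because if $W$ had a huge common neighborhood we could already build $K_{\ell,\ell,m}$ inside $G$ using high-degree vertices guaranteed by $\delta_i > 2m$, contradicting $K_{\ell,\ell,m}$-freeness. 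Summing over the constantly many choices of $W$ and the structure of $K_u$ shows that only $O_m(\delta_{i+1}+\delta_{i+2})$ vertices $u \in V_i$ can be non-neighbors of $v$ whose copy $K_u$ is of a given type; pushing the count through, the total number of non-neighbors of $v$ in $V_i$ is at most $(3m+1)(\delta_{i+1}+\delta_{i+2}) + 2m^2+m$, contradicting $n_i \ge (3m+1)(\delta_{i+1}+\delta_{i+2})+2m^2+m$ together with $d_i(v) < 2m$.

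I expect the main obstacle to be making the ``bounded common neighborhood'' claim precise: one must rule out that a small set $W$ of size $\ge m$ in one or two partite classes has a large common neighborhood, and the cleanest route is to use the hypothesis $\delta_i > 2m$ to supply, inside any large common neighborhood, enough vertices of $V_i$ whose own neighborhoods (of size $> 2m$) overlap to complete a forbidden $K_{\ell,\ell,m}$ — a secondary pigeonhole among the $> 2m$ neighbors of those $V_i$-vertices. Getting the constants to land exactly on $(3m+1)(\delta_{i+1}+\delta_{i+2})+2m^2+m$ will require careful bookkeeping of the at-most-$3$ types of copy $K_u$ (according to which class plays which role in $K_{\ell,\ell,m}$, noting the asymmetry between the two $\ell$-classes and the $m$-class) and of the at-most-$2m^2+m$ ``exceptional'' vertices $u$ that are themselves neighbors of members of $W$. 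Once $\delta_{i+1}, \delta_{i+2} \ge 2m$ is established, the degree-sum bound $|E(G)| \ge \frac12 \cdot 2m(n_1+n_2+n_3) \cdot ... $ — more precisely $2|E(G)| = \sum_{j} \sum_{v\in V_j} d(v) \ge 2m(n_1+n_2+n_3)$ wait, that gives $|E(G)|\ge m(n_1+n_2+n_3)$; the correct conclusion $|E(G)|\ge 2m(n_1+n_2+n_3)$ must instead come from observing each vertex has degree at least $2m$ into \emph{each} of the other two classes, so $d(v) \ge 4m$ for all $v$, whence $2|E(G)| \ge 4m(n_1+n_2+n_3)$ — closes the argument.
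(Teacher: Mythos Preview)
Your approach has a fundamental gap that you yourself stumble into at the end: showing $\delta(G)\ge 2m$ and summing degrees gives only $|E(G)|\ge m(n_1+n_2+n_3)$, off by a factor of two. Your attempted fix---that every vertex has at least $2m$ neighbors in \emph{each} of the other two classes, hence $d(v)\ge 4m$---is simply false under the lemma's hypotheses: nothing prevents vertices in $V_{i+1}$ or $V_{i+2}$ from having total degree exactly $2m$, and the hypothesis $\delta_i>2m$ says nothing directly about degrees in the other two parts. The elaborate pigeonhole machinery you sketch is aimed at establishing $\delta_{i+1},\delta_{i+2}\ge 2m$, but that fact is both immediate from saturation (without using $\delta_i>2m$ at all) and insufficient for the conclusion. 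So the whole strategy cannot close.

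The paper's argument is quite different and much shorter: it counts edges by \emph{location} rather than by degree-sum. Fix a minimum-degree vertex $v_j\in V_j$ for each $j$. Saturation forces every non-neighbor of $v_i$ in $V_{i+1}\cup V_{i+2}$ to share at least $m$ common neighbors with $v_i$, giving at least $m(n_{i+1}+n_{i+2}-\delta_i)$ edges inside $[V_{i+1},V_{i+2}]$. Likewise, non-neighbors of $v_{i+2}$ in $V_{i+1}$ each send $m$ edges into $N_i(v_{i+2})$, and symmetrically for $v_{i+1}$, contributing $m(n_{i+1}-\delta_{i+2})+m(n_{i+2}-\delta_{i+1})$ more edges. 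Finally, each of the $n_i-\delta_{i+1}-\delta_{i+2}$ vertices of $V_i\setminus(N_i(v_{i+1})\cup N_i(v_{i+2}))$ contributes at least $\delta_i$ edges. These four edge sets are pairwise disjoint, so
\[
|E(G)|\ge m(2n_{i+1}+2n_{i+2}-\delta_{i+1}-\delta_{i+2})+\delta_i(n_i-\delta_{i+1}-\delta_{i+2}-m),
\]
which is increasing in $\delta_i$ since $n_i>\delta_{i+1}+\delta_{i+2}+m$. Substituting $\delta_i=2m+1$ and invoking $n_i\ge(3m+1)(\delta_{i+1}+\delta_{i+2})+2m^2+m$ yields $|E(G)|\ge 2m(n_1+n_2+n_3)$. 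The point you are missing is that the hypothesis $\delta_i>2m$ is not used to raise degrees elsewhere; it is used to make the single term $\delta_i\cdot|V_i\setminus(N_i(v_{i+1})\cup N_i(v_{i+2}))|$ large, and the lower bound on $n_i$ is exactly what makes that term absorb the error.
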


\begin{proof}
For each $j\in[3]$, let $v_j$ be a vertex of degree $\delta_j$ in $V_j$.
Each nonneighbor of $v_i$ in $V_{i+1}\cup V_{i+2}$ must have at least $m$ common neighbors with $v_i$.
Therefore there are at least $m(n_{i+1}+n_{i+2}-\delta_i)$ edges joining $V_{i+1}$ and $V_{i+2}$.
Similarly, there are at least $m(n_{i+1}-\delta_{i+2})$ edges joining $V_{i+1}$ and $N_i(v_{i+2})$ and at least $m(n_{i+2}-\delta_{i+1})$ edges joining $V_{i+2}$ and $N_i(v_{i+1})$.
Finally, there are at least $\delta_i(n_i-\delta_{i+1}-\delta_{i+2})$ edges incident to $V_i\setminus(N_i(v_{i+1})\cup N_i(v_{i+2}))$.
Summing, we have
\begin{align*}
|E(G)|&\ge m(2n_{i+1}+2n_{i+2}-\delta_{i+1}-\delta_{i+2})+\delta_i(n_i-\delta_{i+1}-\delta_{i+2}-m).
\end{align*}
Since $n_i>\delta_{i+1}+\delta_{i+2}+m$, this lower bound is increasing in $\delta_i$.
Therefore, if $\delta_i>2m$, then
\begin{align*}
|E(G)|&\ge m(2n_{i+1}+2n_{i+2}-\delta_{i+1}-\delta_{i+2})+(2m+1)(n_i-\delta_{i+1}-\delta_{i+2}-m)\\
&\ge 2m(n_1+n_2+n_3)+n_i-\left[(3m+1)(\delta_{i+1}+\delta_{i+2})+2m^2+m\right]\\
&\ge 2m(n_1+n_2+n_3).\qedhere
\end{align*}
\end{proof}

\begin{lemma}\label{lem:mindeg}
Let $n_1\ge n_2\ge n_3\ge 32m^3+40m^2+11m$.
If $G$ is a $K_{\ell,\ell,m}$-saturated subgraph of $K_{n_1,n_2,n_3}$ such that $\delta_i>2m$ for some $i\in[3]$, then $|E(G)|\ge 2m(n_1+n_2+n_3)$.
\end{lemma}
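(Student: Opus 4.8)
The plan is to reduce to Lemma~\ref{lem:deltai} wherever possible and to dispatch the leftover configurations by a direct count that exploits the rigidity forced by saturation. I would first record two structural facts about an arbitrary $K_{\ell,\ell,m}$-saturated subgraph $G$ of $K_{n_1,n_2,n_3}$ (with the $n_i$ exceeding $2m$). \textbf{(a)} Every $v\in V_i$ has at least $m$ neighbours in each of $V_{i+1}$ and $V_{i+2}$: inserting an incident nonedge must create a copy of $K_{\ell,\ell,m}$ through $v$, and every vertex of a $K_{\ell,\ell,m}$ has $\ge m$ neighbours in each of the two partite classes meeting the remaining two parts. Hence $\delta_i\ge 2m$ for all $i$, and, writing $e(X,Y)$ for the number of $G$-edges between $X$ and $Y$, $e(V_i,V_j)\ge m\max\{n_i,n_j\}$ for every pair $i\neq j$. \textbf{(b)} If $\delta_i\le 2m$ then $\delta_i=2m$, and any $v\in V_i$ of degree exactly $2m$ has $|N_{i+1}(v)|=|N_{i+2}(v)|=m$, with the $K_{\ell,\ell,m}$ created by an inserted nonedge at $v$ forced to use $N_{i+1}(v)$ (or $N_{i+2}(v)$) as a whole partite class; so the local picture at a degree-$2m$ vertex is essentially determined by the pair of $m$-sets $N_{i+1}(v),N_{i+2}(v)$.

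Next, since $n_i\ge n_3\ge 32m^3+40m^2+11m\ge (3m+1)(10m^2)+2m^2+m$, Lemma~\ref{lem:deltai} finishes the proof the moment we produce an index $i$ with $\delta_i>2m$ and $\delta_{i+1}+\delta_{i+2}\le 10m^2$ (indeed more generally whenever $\delta_i>2m$ and $n_i\ge(3m+1)(\delta_{i+1}+\delta_{i+2})+2m^2+m$). If at most one $\delta_j$ exceeds $2m$ this is immediate, since for that index the other two min-degrees sum to $\le 4m$. Otherwise at least two $\delta_j$ exceed $2m$, and either we are done or the displayed hypothesis fails for every such index; inspecting the three possibilities for $\{j:\delta_j>2m\}$ then forces at least two of $\delta_1,\delta_2,\delta_3$ to exceed $5m^2$. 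Write $V_a,V_b$ for two such parts and $V_c$ for the third. Summing degrees over $V_a$ and $V_b$ gives $|E(G)|+e(V_a,V_b)\ge 5m^2(n_a+n_b)$; combining with $e(V_a,V_c),e(V_b,V_c)\ge m n_c$ from (a) and averaging yields $|E(G)|\ge\tfrac{5m^2}{2}(n_a+n_b)+m n_c$, which a short computation shows is $\ge 2m(n_1+n_2+n_3)$ whenever $n_a+n_b\ge n_c$. As $\{a,b,c\}=\{1,2,3\}$, this settles every case except $c=1$ with $n_1>n_2+n_3$. Moreover, if the criterion failed at index $1$ as well (which happens precisely when $\delta_1>2m$, i.e. when all three min-degrees exceed $2m$), then $\delta_2+\delta_3>(n_1-2m^2-m)/(3m+1)$, and since $n_3\ge 32m^3+40m^2+11m$ this makes $\sum_{v\in V_2}\deg v+\sum_{v\in V_3}\deg v\ge(\delta_2+\delta_3)n_3$ large enough to push $|E(G)|$ past $2m(n_1+n_2+n_3)$. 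So the single remaining case is: $\delta_1=2m$, $\delta_2,\delta_3>5m^2$, and $n_1$ (possibly much) larger than $n_2+n_3$.

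In that case (a) gives $e(V_1,V_2),e(V_1,V_3)\ge m n_1$, so if $V_1$ has at least $2m(n_2+n_3)$ vertices of degree $>2m$ then already $e(V_1,V_2)+e(V_1,V_3)\ge 2m(n_1+n_2+n_3)$ and we are done. Otherwise all but fewer than $2m(n_2+n_3)$ vertices of $V_1$ have degree exactly $2m$, each encoded by a pair (an $m$-subset of $V_2$, an $m$-subset of $V_3$) as in (b). Applying the saturation condition to the $n_2-m$ (resp.\ $n_3-m$) nonedges at one such vertex $v$ forces large families of further degree-$2m$ vertices of $V_1$ whose $V_3$-neighbourhood coincides with that of $v$ (and symmetrically for $V_2$); a pigeonhole argument on these encodings then either exhibits $\ell$ vertices of $V_1$ having $\ell$ common neighbours in $V_2$ and $\ell$ common neighbours in $V_3$ that span a copy of $K_{\ell,\ell,m}$ --- impossible --- or forces enough additional edges among and into $V_2\cup V_3$ to reach the bound $2m(n_1+n_2+n_3)$.

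I expect the main obstacle to be exactly this last case: when $V_1$ is enormous and of minimum degree $2m$, the two ``$\ge m n_1$'' bounds on the edge sets incident to $V_1$ undershoot $2m n_1$ by essentially the very margin, $\approx 2m(n_2+n_3)$, that one must recover, so the argument has to convert the rigid local structure of the many degree-$2m$ vertices into a global edge count through saturation. Making that pigeonhole/type-counting succeed using only $n_3\ge 32m^3+40m^2+11m$ --- rather than $n_1$ astronomically large relative to $n_2,n_3$ --- is where the real care is required. The small value $m=1$ (where $5m^2$ and $2m$ nearly coincide, and one should use $\delta>5m^2\iff\delta\ge 5m^2+1$ in the averaging above) should be checked separately but causes no further difficulty.
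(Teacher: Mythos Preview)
Your proposal correctly isolates the crux --- the configuration with $\delta_1=2m$, many degree-$2m$ vertices in $V_1$, and $n_1$ potentially enormous compared with $n_2,n_3$ --- but leaves it genuinely open. You describe a ``pigeonhole argument on these encodings'' and then immediately concede that ``making that pigeonhole/type-counting succeed \dots\ is where the real care is required.'' That is not a proof sketch; it is an identification of the obstacle. Nothing in your outline explains why many degree-$2m$ vertices sharing (or nearly sharing) their $m$-sets in $V_2$ and $V_3$ would force either a $K_{\ell,\ell,m}$ or the missing $2m(n_2+n_3)$ edges, and there is no obvious way to extract that conclusion with only $n_3\ge 32m^3+40m^2+11m$ since the number of available ``types'' is $\binom{n_2}{m}\binom{n_3}{m}$, far larger than $|R|$.

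The paper bypasses the pigeonhole idea entirely with a direct structural observation you do not use: if $v\in V_1$ has degree exactly $2m$, then any vertex of $N_2(v)$ is adjacent to \emph{every} vertex of $V_3\setminus N_3(v)$. (Adding a nonedge from $v$ to $w\in V_3\setminus N_3(v)$ creates a $K_{\ell,\ell,m}$ in which the $V_2$-part, being contained in $N_2(v)$, has size at most $m$ and hence equals $N_2(v)$; that part is complete to $w$.) Consequently every vertex of $N_2(R)$ has at least $n_3-m$ neighbours in $V_3$, so $e(V_2,V_3)\ge(n_3-m)\,|N_2(R)|$. Either this already exceeds $4mn_2\ge 2m(n_2+n_3)$ and one is done (combining with $\sum_{v\in V_1}d(v)\ge 2mn_1$), or $|N_2(R)|<4mn_2/(n_3-m)$ is tiny. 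In the latter case the edges incident to $R$ together with the edges incident to $V_2\setminus N_2(R)$ (which has no edges to $R$) give at least $2m(n_1-2m(n_2+n_3))+\delta_2\bigl(n_2-4mn_2/(n_3-m)\bigr)$, and this exceeds $2m(n_1+n_2+n_3)$ once $\delta_2\ge 8m^2+4m+1$. Hence $\delta_2\le 8m^2+4m$, and now Lemma~\ref{lem:deltai} applied at $i=3$ and then at $i=2$ forces $\delta_2=\delta_3=2m$, finishing the argument. This replaces your undeveloped combinatorial step with two lines of counting.

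A smaller issue: your claim (a), that every vertex has at least $m$ neighbours in \emph{each} of the other two parts, is not quite what saturation gives. The correct statement (which the paper uses) is that a vertex either has $\ge m$ neighbours in both other parts \emph{or} is completely joined to one of them; a vertex complete to $V_3$ could in principle have only $m-1$ neighbours in $V_2$. This does not affect $\delta_i\ge 2m$, but your inequalities $e(V_i,V_j)\ge m\max\{n_i,n_j\}$ need a line of justification accounting for such vertices (they have degree $\ge n_3$, so they are harmless, but you should say so).
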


\begin{proof}
First observe that each vertex in $V_i$ has at least $m$ neighbors in both $V_{i+1}$ and $V_{i+2}$ or is completely joined to $V_{i+1}$ or $V_{i+2}$.
Thus $\delta(G)\ge 2m$.
There are two cases to consider depending on the order of $n_1$.

{\bf Case 1:} $n_1<4mn_2$.
If $\delta_1\ge 6m$, then $|E(G)|\ge 6mn_1\ge 2m(n_1+n_2+n_3)$.
Therefore we may assume that $\delta_1< 6m$.
If $\delta_2\ge 8m^2+4m$, then 
$|E(G)|\ge (8m^4+4m)n_2 \ge 2m(n_1+n_2+n_3)$.
Therefore we may assume that $\delta_2< 8m^2+4m$.
Since $n_3\ge (3m+1)(8m^2+10m)+2m^2+m$, Lemma~\ref{lem:deltai} implies that if $\delta_3>2m$, then $|E(G)|\ge 2m(n_1+n_2+n_3)$.
Therefore we may assume that $\delta_3=2m$.
Lemma~\ref{lem:deltai} now implies that if $\delta_1>2m$ or $\delta_2>2m$, then $|E(G)|\ge 2m(n_1+n_2+n_3)$.

{\bf Case 2:} $n_1>4mn_2$.
If $\delta_1>2m$, then $|E(G)|\ge (2m+1)n_1\ge 2m(n_1+n_2+n_3)$.
Therefore we may assume that $\delta_1=2m$.
Let $R$ be the set of vertices in $V_1$ with degree $2m$.
If $|V_1\setminus R|\ge 2m(n_2+n_3)$, then $|E(G)|\ge 2m(n_1+n_2+n_3)$.
Therefore we assume that $|V_1\setminus R|< 2m(n_2+n_3)$.

If $v\in R$, then each vertex in $N_2(v)$ is adjacent to every vertex in $V_3\setminus N_3(v)$.
Thus each vertex in $N_2(R)$ has at least $n_3-m$ neighbors in $V_3$.
If $|N_2(R)|\ge 4mn_2/(n_3-m)$, then there are at least $4mn_2$ edges joining $V_2$ and $V_3$, and consequently $|E(G)|\ge 2m(n_1+n_2+n_3)$.
Therefore we may assume that $|N_2(R)|< 4mn_2/(n_3-m)$.

There are at least $\delta_2(n_2-4mn_2/(n_3-m))$ edges incident to $V_2\setminus N_2(R)$.
There are at least $2m(n_1-2m(n_2+n_3))$ edges incident to $R$.
Therefore, if $\delta_2\ge 8m^2+4m+1$, then
\begin{align*}
|E(G)|&\ge 2m(n_1-2m(n_2+n_3))+(8m^2+4m+1)\left(n_2-\frac{4mn_2}{n_3-m}\right)\\
&\ge 2mn_1+4mn_2+n_2-n_2\left(\frac{(8m^2+4m+1)(4m)}{n_3-m}\right)\\
&\ge 2m(n_1+n_2+n_3).
\end{align*}
Therefore we may assume that $\delta_2 \le 8m^2+4m$.

Since $\delta_1=2m$, $\delta_2\le 8m^2+4m$, and $n_3\ge (3m+1)(8m^2+6m)+2m^2+m$, Lemma~\ref{lem:deltai} implies that if $\delta_3>2m$, then $|E(G)|\ge 2m(n_1+n_2+n_3)$.
Therefore we may assume that $\delta_3=2m$.
It now follows from Lemma~\ref{lem:deltai} that if $\delta_2>2m$, then $|E(G)|\ge 2m(n_1+n_2+n_3)$.
\end{proof}

We now prove Theorems~\ref{thm:lll} and~\ref{thm:lll-1}.

\begin{proof}[Proof of Theorem~\ref{thm:lll}]
Let $G$ be a $K_{\ell,\ell,\ell}$-saturated subgraph of $K_{n_1,n_2,n_3}$ of minimum size.
It follows from Lemma~\ref{lem:mindeg} that if $\delta_i>2\ell$ for any $i\in[3]$, then $|E(G)|\ge 2\ell(n_1+n_2+n_3)$.
Since it is clear that $\delta(G)\ge 2\ell$, we assume that $\delta_1=\delta_2=\delta_3=2\ell$.

For $i\in [3]$, let $v_i\in V_i$ be a vertex of degree $2\ell$. 
Thus $v_i$ has $\ell$ neighbors in $V_{i+1}$ and $\ell$ neighbors in $V_{i+2}$, and $G$ contains all edges joining $N_{i+1}(v_i)$ to $V_{i+2}\setminus N_{i+2}(v_i)$ and all edges joining $N_{i+2}(v_i)$ to $V_{i+1}\setminus N_{i+1}(v_i)$.
Therefore, the vertices of degree $2\ell$ in $G$ form an independent set.
Let $S=N(v_1)\cup N(v_2)\cup N(v_3)$ and let $S_i=S\cap V_i$.  
Since $v_{i+1}$ and $v_{i+2}$ have $\ell$ common neighbors, we conclude that $N_i(v_{i+1})=N_i(v_{i+2})$ and therefore $|S_i|=\ell.$
Since the addition of an edge joining $v_i$ and any vertex in $(V_{i+1}\cup V_{i+2})\setminus N(v_i)$ completes a copy of $K_{\ell,\ell,\ell}$, there are at least $\ell^2-1$ edges joining $S_{i+1}$ and $S_{i+2}$.
Therefore there are at least $\ell(n_{i+1}+n_{i+2})-\ell^2-1$ edges joining $V_{i+1}$ and $V_{i+2}$.
Thus $|E(G)|\ge 2\ell(n_1+n_2+n_3)-3\ell^2-3$, and in conjunction with Theorem~\ref{thm:lmmupperbound} we conclude that $\sat(K_{n_1,n_2,n_3},K_{\ell,\ell,\ell})=2\ell(n_1+n_2+n_3)-3\ell^2-3$.

Since $|E(G)|=2\ell(n_1+n_2+n_3)-3\ell^2-3$, it follows that there are exactly $\ell^2-1$ edges joining $S_i$ and $S_{i+1}$ for all $i\in[3]$.
Suppose that $G$ is not isomorphic to a graph from Construction~\ref{con:1} or ~\ref{con:2}.
Thus the three nonedges in $G[S]$ do not induce $K_3$ or $P_4$.
Without loss of generality, assume that $u_i^1u_{i+1}^1$ is a nonedge in $G[S]$ and the other two nonedges in $G[S]$ are incident to $u_i^2$ and $u_{i+1}^2$, respectively.  
Since $G$ is $K_{\ell,\ell,\ell}$-saturated, there is a subgraph $H$ of $G+v_iv_{i+1}$ that is isomorphic to $K_{\ell,\ell,\ell}.$  
It follows that $H$ must contain $v_i$, $v_{i+1}$ and $S_{i+2}$, and therefore $H$ cannot contain $u_i^2$ or $u_{i+1}^2$.
Since $H$ must contain $\ell$ neighbors of $v_i$ in $V_{i+1}$ and $u_{i+1}^2\notin H$, we conclude that $u_{i+1}^1\in H$.
Similarly, it follows that $u_i^1\in H$.
However, this implies that $H$ contains the nonedge $u_i^1u_{i+1}^1$, a contradiction.
Therefore, $G$ is isomorphic to a graph from Construction~\ref{con:1} or ~\ref{con:2}.
\end{proof}

\begin{proof}[Proof of Theorem~\ref{thm:lll-1}]
Let $G$ be a $K_{\ell,\ell,\ell-1}$-saturated subgraph of $K_{n_1,n_2,n_3}$ of minimum size.
It follows from Lemma~\ref{lem:mindeg} that if $\delta_i>2(\ell-1)$ for any $i\in[3]$, then $|E(G)|\ge 2(\ell-1)(n_1+n_2+n_3)$.
It is clear that $\delta(G)\ge 2(\ell-1)$, and thus we assume that $\delta_1=\delta_2=\delta_3=2(\ell-1)$.

For $i\in [3]$, let $v_i\in V_i$ be a vertex of degree $2(\ell-1)$. 
Thus $v_i$ has $\ell-1$ neighbors in $V_{i+1}$ and $\ell-1$ neighbors in $V_{i+2}$, and $G$ contains all edges joining $N_{i+1}(v_i)$ to $V_{i+2}\setminus N_{i+2}(v_i)$ and all edges joining $N_{i+2}(v_i)$ to $V_{i+1}\setminus N_{i+1}(v_i)$.
Therefore, the vertices of degree $2(\ell-1)$ in $G$ form an independent set.
Let $S=N(v_1)\cup N(v_2)\cup N(v_3)$ and let $S_i=S\cap V_i$.  
Since $v_{i+1}$ and $v_{i+2}$ have $\ell-1$ common neighbors, we conclude that $N_i(v_{i+1})=N_i(v_{i+2})$ and therefore $|S_i|=\ell-1.$
Furthermore, since the addition of an edge joining $v_i$ and a vertex in $V_{i+1}\setminus N_{i+1}(v_i)$ yields a copy of $K_{\ell,\ell,\ell-1}$, it follows that $N_{i+1}(v_i)$ and $N_{i+2}(v_i)$ must be completely joined.
Thus, $S_i$ and $S_{i+1}$ are completely joined for all $i\in [3]$.
Therefore the graph from Construction~\ref{con:4} is a subgraph of $G$.
Since $G$ is $K_{\ell,\ell,\ell-1}$-saturated, it follows that $G$ is isomorphic to the graph from Construction~\ref{con:4}, and therefore $\sat(K_{n_1,n_2,n_3},K_{\ell,\ell,\ell-1})=2(\ell-1)(n_1+n_2+n_3)-3(\ell-1)^2$.
\end{proof}

We note that it is possible to lower the bounds on $n_3$ in Theorems~\ref{thm:lll} and~\ref{thm:lll-1} through a more careful analysis of the algebra in Lemmas~\ref{lem:deltai} and~\ref{lem:mindeg}.
However, this appears still to yield a lower bound on $n_3$ that is cubic in $\ell$, and mainly distracts from the main ideas of the proof.

\section{The saturation number of $K_{\ell,\ell,\ell-2}$}~\label{sec4}

In this section we prove that the graph from Construction~\ref{con:5} is within an additive constant of the minimum number of edges in a $K_{\ell,\ell,\ell-2}$-saturated subgraph of $K_{n,n,n}$.
Given two sets of vertices $S$ and $T$, we let $[S,T]$ denote the set of edges with one endpoint in $S$ and one endpoint in $T$.

\begin{theorem}
Let $\ell$ be a positive integer.
For $n$ sufficiently large,
$$\sat(K_{n,n,n},K_{\ell,\ell,\ell-2})\ge 6(\ell-1)n-(72\ell^2-40\ell+54).$$
\end{theorem}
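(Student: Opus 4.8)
The plan is to mirror the structure of the proof of Theorem~\ref{thm:lll}, but now with target clique $K_{\ell,\ell,\ell-2}$ and host $K_{n,n,n}$, and to show that any $K_{\ell,\ell,\ell-2}$-saturated subgraph $G$ of $K_{n,n,n}$ has at least $6(\ell-1)n-(72\ell^2-40\ell+54)$ edges. First I would invoke Lemma~\ref{lem:mindeg} with $m=\ell-2$: if $\delta_i(G)>2(\ell-2)$ for some $i$, then $|E(G)|\ge 2(\ell-2)\cdot 3n=6(\ell-2)n$, which for $n$ large already exceeds $6(\ell-1)n-(72\ell^2-40\ell+54)$? No — it does not, since $6(\ell-2)n<6(\ell-1)n$. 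So the cheap case is actually the hard one here, and the real content is the case $\delta_1=\delta_2=\delta_3=2(\ell-2)$ together with a sharper accounting; I expect the genuine obstacle to be precisely that the minimum-degree bound from Lemma~\ref{lem:mindeg} is too weak by a linear term $6n$, so I must extract an extra $\approx 6n$ edges by a local argument around the low-degree vertices, paying back only an additive $O(\ell^2)$.

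Concretely, after reducing (via a variant/refinement of Lemma~\ref{lem:deltai}, or a direct count) to the situation where each $V_i$ contains a vertex $v_i$ of small degree, I would argue as in Theorem~\ref{thm:lll}: for $v_i$ of degree $2(\ell-2)$, every nonneighbor of $v_i$ in $V_{i+1}\cup V_{i+2}$ shares $\ell-2$ common neighbors with $v_i$ (since adding that edge must create $K_{\ell,\ell,\ell-2}$ and $v_i$ can only contribute to the $(\ell-2)$-side after the addition, forcing $|N_{i+1}(v_i)|,|N_{i+2}(v_i)|\ge\ell-1$ — here is where $\ell-1$ rather than $\ell-2$ enters). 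This pushes the minimum degree up: each $v_i$ of ``low'' degree in fact has degree $\ge 2(\ell-1)$, or else a copy of $K_{\ell,\ell,\ell-2}$ fails to appear somewhere. So the correct baseline is $\delta(G)\ge 2(\ell-1)$, giving the leading term $6(\ell-1)n$. Then, following the Theorem~\ref{thm:lll} template, set $S_i=N_i(v_{i+1})\cup N_i(v_{i+2})$; control $|S_i|$ (it will be at most $2(\ell-1)$ rather than exactly $\ell-1$, since the two neighborhoods need not coincide when the target third part has size $\ell-2$), use saturation to force many edges among the $S_i$'s, and collect the resulting $-O(\ell^2)$ correction. The constant $72\ell^2-40\ell+54$ should emerge from bounding $\sum_i |S_i|^2\le 3(2(\ell-1))^2=12(\ell-1)^2$ plus the overlaps with $[V_i,V_{i+1}]$ double-counts, plus a few $+O(\ell)$ slack terms; I would not grind the exact arithmetic here but arrange the inequalities so the error is visibly $O(\ell^2)$.

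The main obstacle, as signalled above, is that Lemma~\ref{lem:mindeg} alone gives only $6(\ell-2)n$, which is $6n$ short; closing this gap requires showing $\delta(G)\ge 2(\ell-1)$ (not merely $2(\ell-2)$) for a saturated $G$, \emph{and} handling the vertices whose degree is exactly $2(\ell-2)$ if any survive — these would have to be completely joined to one of $V_{i+1},V_{i+2}$, in which case $n\le \ell-2$, contradicting $n$ large. So in fact no vertex of degree $2(\ell-2)$ that fails to be completely joined can exist, and a vertex completely joined to a part has degree $\ge n > 2(\ell-1)$ for $n$ large; this dichotomy forces $\delta(G)\ge 2(\ell-1)$ cleanly. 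A secondary obstacle is that, unlike in $K_{n_1,n_2,n_3}$ with distinct sizes, here the balanced case means the Case~1/Case~2 split of Lemma~\ref{lem:mindeg} collapses, so I would either re-run a streamlined version of that argument or cite it with $m=\ell-1$ in spirit; care is needed because $G$ is $K_{\ell,\ell,\ell-2}$-saturated, not $K_{\ell,\ell,\ell-1}$-saturated, so Lemma~\ref{lem:mindeg} as stated applies with $m=\ell-2$ and the degree improvement to $2(\ell-1)$ must be supplied separately by the local saturation argument. Assembling these pieces — minimum degree exactly $2(\ell-1)$ on a set of all but $O(\ell^2)$ vertices, plus the $S_i$-edge count — yields the stated bound.
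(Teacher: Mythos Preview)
Your reduction to $\delta(G)\ge 2(\ell-1)$ is correct and matches the paper's opening move. The gap is in the next step: the template of Theorem~\ref{thm:lll} does \emph{not} transfer to $K_{\ell,\ell,\ell-2}$, and the deficit it leaves is linear in $n$, not $O(\ell^2)$. Take $v_i\in V_i$ of degree $2(\ell-1)$ and a nonneighbor $u\in V_{i+1}$. Adding $uv_i$ creates a copy of $K_{\ell,\ell,\ell-2}$ containing $v_i$; since $|N_{i+2}(v_i)|=\ell-1$, the $V_{i+2}$-part of that copy cannot have size $\ell$, so it has size $\ell-2$. Hence $u$ is only guaranteed $\ell-2$ common neighbours with $v_i$ in $V_{i+2}$, not $\ell$. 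Running your $S_i$ count therefore yields at most
\[
|[V_{i+1},V_{i+2}]|\ \ge\ (\ell-2)(n-|N_{i+1}(v_i)|)+(\ell-2)(n-|N_{i+2}(v_i)|)-O(\ell^2)\ =\ 2(\ell-2)n-O(\ell^2),
\]
and summing over the three pairs gives $6(\ell-2)n-O(\ell^2)$, which is $6n$ short of the target $6(\ell-1)n-O(\ell^2)$. Nothing in the $S_i$ bookkeeping recovers those $6n$ edges; the neighbourhoods $N_i(v_{i+1})$ and $N_i(v_{i+2})$ need not coincide, nothing forces $S_{i+1}$ to be nearly completely joined to $V_{i+2}\setminus S_{i+2}$, and the minimum-degree bound alone gives even less.

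The paper closes this linear gap by an entirely different mechanism. After fixing $v_i$ and observing that each vertex of $V_{i+1}\cup V_{i+2}$ has $\ge \ell-2$ neighbours in $N(v_i)$, it recursively grows a set $X_i\subseteq V_{i+1}\cup V_{i+2}$ of vertices with $\ge \ell-1$ neighbours into the already-grown set, and studies the tree components $T_{i,1},\dots,T_{i,a_i}$ of the remainder $R_i$. The key extra edges come from $V_i$, not from $[V_{i+1},V_{i+2}]$: if some tree component $T_{i,k_i}$ is small, then for $x\in T_{i,k_i}$ every vertex of $V_{i+1}\cup V_{i+2}$ outside $T_{i,k_i}$ must have $\ell$ neighbours in $N_i(x)$ (since adding the edge cannot produce a $K_{\ell,\ell}$ inside $V_{i+1}\cup V_{i+2}$), which pumps $\sum_{v\in V_i}d(v)$ up by roughly $2\ell n$. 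One then balances this against the bound $|[V_{i+1},V_{i+2}]|\ge 2(\ell-1)n-6(\ell-1)^2-a_i$, solves a quadratic inequality in $a_i$, and uses $a_i<n$ together with a limit argument to force $a_i=O(\ell^2)$ for $n$ large, in two separate cases depending on whether $T_{i,k_i}$ meets one part or both. None of this structure is visible from the $S_i$ template.
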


\begin{proof}

Let $G$ be a $K_{\ell,\ell,\ell-2}$-saturated subgraph of $K_{n,n,n}$.
If $\delta_i(G)\ge 6(\ell-1)$ for some $i\in [3]$, then $|E(G)|\ge 6(\ell-1)n$.
Therefore we may assume that $\delta_i<6(\ell-1)$ for all $i\in [3]$, and consequently a vertex of degree $\delta_i$ in $V_i$ must have nonneighbors in both $V_{i+1}$ and $V_{i+2}$.
Assume that $v$ is a vertex of degree at most $2\ell-3$ in $V_i$.
If $|N_{i+1}(v)|<\ell-2$, the the addition of an edge joining $v$ and $V_{i+2}$ does not complete a copy of $K_{\ell,\ell,\ell-2}$.
Therefore we may assume without loss of generality that $2\ell-4\le d(v)\le 2\ell-3$ and $v$ has $\ell-2$ neighbors in $V_{i+1}$ and at most $\ell-1$ neighbors in $V_{i+2}$.
It follows that the addition of an edge joining $v$ and $V_{i+1}$ does not complete a copy of $K_{\ell,\ell,\ell-2}$, and therefore $G$ is not $K_{\ell,\ell,\ell-2}$-saturated.
We conclude that $\delta_i\ge 2\ell-2$ for all $i\in [3]$.

Let $c=72\ell^2-40\ell+54$.
If $|[V_{i},V_{i+1}]|\ge 2(\ell-1)n-c/3$ for all $i\in[3]$, then $|E(G)|\ge 6(\ell-1)n-c$.
Therefore we may assume that $|[V_{i+1},V_{i+2}]|<2(\ell-1)n-c/3$ for some $i\in [3]$.
Let $v_i\in V_i$ have degree $\delta_i$.
Every vertex in $V_{i+1}\setminus N_{i+1}(v_i)$ is adjacent to at least $\ell-2$ vertices in $N_{i+2}(v_i)$.
If $v'$ is a vertex in $V_i$ that has only $\ell-2$ neighbors in $V_{i+2}$, then each vertex in $V_{i+2}\setminus N_{i+2}(v')$ has $\ell$ neighbors in $N_{i+1}(v')$.
Therefore 
\begin{align*}
|[V_{i+1},V_{i+2}]|&\ge (\ell-2)(n-\delta_i)+\ell(n-\delta_i-\ell+2)\\
&\ge 2(\ell-1)n-((2\ell-2)\delta_i+\ell^2-2\ell)\\
&\ge 2(\ell-1)n-(13\ell^2-26\ell+12),
\end{align*}
a contradiction.
Therefore we assume that every vertex in $V_i$ has at least $\ell-1$ neighbors in $V_{i+2}$, and by symmetry, also in $V_{i+1}$.

Let $X_i^0=N(v_i)$.
For $k\ge 1$, recursively define $X_i^k$ to be the vertices in $(V_{i+1}\cup V_{i+2}) \setminus(\bigcup_{j=0}^{k-1}X_i^j)$ that have at least $\ell-1$ neighbors in $\bigcup_{j=0}^{k-1}X_i^j$.
Define $X_i$ to be the set of vertices that are in $X_i^k$ for any value of $k$.
By definition, $G[X_i]$ contains at least $(\ell-1)(|X_i|-\delta_i)$ edges.

Let $R_i=(V_{i+1}\cup V_{i+2})\setminus X_i$.
Note that each vertex in $R_i$ is adjacent to exactly $\ell-2$ vertices in $N(v_i)$.
Let $T_{i,1},\ldots,T_{i,{a_i}}$ be the components of $G[R_i]$ that are trees.
Thus $G[R_i]$ contains at least $|R_i|-a_i$ edges, and 
\begin{equation}\label{ineq:VV}
|[V_{i+1},V_{i+2}]|\ge (\ell-1)(2n-\delta_i)-a_i\ge 2(\ell-1)n-6(\ell-1)^2-a_i.
\end{equation}
If $T_{i,b}$ consists of single vertex $v\in V_{i+1}$ and $T_{i,b'}$ consists of a single vertex $u\in V_{i+2}$, then the addition of $uv$ cannot complete a copy of $K_{\ell,\ell,\ell-2}$ in $G$.
Therefore, since $N_{i+1}(v_i)$ and $N_{i+2}(v_i)$ are nonempty,
\begin{equation}\label{a_i}
a_i\le \max\{|R_i\cap V_{i+1}|,|R_i\cap V_{i+2}|\}< n.
\end{equation}

Observe that
\begin{align*}
|E(G)|&\ge \sum_{j=1}^{a_i}\left(|E(T_{i,j})|+|[V(T_{i,j}),V_i]|\right).
\end{align*}
If $|E(T_{i,j})|+|[V(T_{i,j}),V_i]|> 6(\ell-1)n/a_i$ for all $j\in[a_i]$, then $|E(G)|> 6(\ell-1)n$.
Therefore we assume that there is a component $T_{i,k_i}$ of $G[R_i]$ such that $|E(T_{i,k_i})|+|E(T_{i,k_i},V_i)|\le 6(\ell-1)n/a_i$.
Thus $|V(T_{i,k_i})|\le 6(\ell-1)n/a_i+1$.
If $x\in V_{i+2}\cap V(T_{i,k_i})$ and $w\in V_{i+1}\setminus V(T_{i,k_i})$, then the addition of $xw$ cannot complete a copy of $K_{\ell,\ell}$ in $V_{i+1}\cup V_{i+2}$.
Therefore each vertex in $w\in V_{i+1}\setminus V(T_{i,k_i})$ has at least $\ell$ neighbors in $N_i(x)$.
Observe that $|N_i(x)|\le 6(\ell-1)n/a_i$.
Similarly, for $x\in V_{i+1}\cap V(T_{i,k_i})$, every vertex in $V_{i+2}\setminus V(T_{i,k_i})$ has at least $\ell$ neighbors in $N_i(x)$, and $|N_i(x)|\le 6(\ell-1)n/a_i$.
We consider two cases.

{\bf Case 1:} {\it For some $i\in 3$, $|[V_{i+1},V_{i+2}]|<2(\ell-1)n-c/3$ and $T_{i,k_i}$ contains vertices in both $V_{i+1}$ and $V_{i+2}$.}
Let $x_{i+1}\in V_{i+1}\cap V(T_{i,k_i})$ and let $x_{i+2}\in V_{i+2}\cap V(T_{i,k_i})$.
Therefore 
\begin{align*}
\sum_{v\in V_i}d(v)&\ge \delta_i(n-d_i(x_{i+1})-d_i(x_{i+2}))+\ell(n-d_{i+2}(x_{i+1})) +\ell(n-d_{i+1}(x_{i+2}))\\
&\ge 2(\ell-1)(n-12(\ell-1)n/a_i)+2\ell(n-6(\ell-1)n/a_i)
\end{align*}
Summing the edges we have
\begin{align*}
|E(G)|&\ge |[V_{i+1},V_{i+2}]|+\sum_{v\in V_i}d(v)\\
&\ge 2(\ell-1)n-6(\ell-1)^2-a_i+2(\ell-1)(n-12(\ell-1)n/a_i)+2\ell(n-6(\ell-1)n/a_i)\\
&\ge -a_i+(6(\ell-1)+2)n-6(\ell-1)^2-(36\ell^2-60\ell+24)n/a_i.
\end{align*}
If $|E(G)|<6(\ell-1)n$, then we conclude that 
\begin{align*}
a_i&<(n-3(\ell-1)^2)-\sqrt{(n-3(\ell-1)^2)^2-(36\ell^2-60\ell+24)n}\quad\text{or}\\
a_i&>(n-3(\ell-1)^2)+\sqrt{(n-3(\ell-1)^2)^2-(36\ell^2-60\ell+24)n}.
\end{align*}
From~(\ref{a_i}) we know that $a_i<n$, so we conclude that for $n$ sufficiently large, $$a_i<(n-3(\ell-1)^2)-\sqrt{(n-3(\ell-1)^2)^2-(36\ell^2-60\ell+24)n}.$$
Since 
$$\lim_{n\to\infty}(n-3(\ell-1)^2)-\sqrt{(n-3(\ell-1)^2)^2-(36\ell^2-60\ell+24)n}=18\ell^2-30\ell+12,$$
it follows from the integrality of $a_i$ that for $n$ sufficiently large, $a_i\le 18\ell^2-30\ell+12$.
Therefore $|[V_{i+1},V_{i+2}]|\ge 2(\ell-1)n-6(\ell-1)^2-(18\ell^2-30\ell+12)\ge 2(\ell-1)n-c/3$, a contradiction.

{\bf Case 2:} {\it For some $i\in 3$, $|[V_{i+1},V_{i+2}]|<2(\ell-1)n-c/3$ and $T_{i,k_i}\cap V_{i+1}=\varnothing$ or $T_{i,k_i}\cap V_{i+2}=\varnothing$.}
Without loss of generality we assume that $|[V_2,V_3]|<2(\ell-1)n-c/3$ and $T_{1,k_1}\cap V_3=\varnothing$.
Thus $T_{1,k_1}$ consists of a single vertex in $V_2$ that has only $\ell-2$ neighbors in $V_3$; call this vertex $x$.
Furthermore, $d(x)\le 6(\ell-1)n/a_1$.
Since the addition of an edge joining $x$ to $V_3$ cannot complete a copy of $K_{\ell,\ell}$ in $V_2\cup V_3$, each nonneighbor of $x$ in $V_3$ has at least $\ell$ neighbors in $N_1(x)$.
Since every vertex in $V_1$ has at least $\ell-1$ neighbors in $V_3$, we conclude that $|[V_1,V_3]|\ge (2\ell-1)(n-6(\ell-1)n/a_1)$.
Consequently,
\begin{align*}
|E(G)|&=|[V_1,V_2]|+|[V_1,V_3]|+|[V_2,V_3]|\\
&\ge |[V_1,V_2]|+(2\ell-1)(n-6(\ell-1)n/a_1)+(2(\ell-1)n-6(\ell-1)^2-a_1)\\
&=|[V_1,V_2]|+4(\ell-1)n+n-(12\ell^2-18\ell+6)n/a_1-6(\ell-1)^2-a_1.
\end{align*}

First assume that $|[V_1,V_2]|\ge 2(\ell-1)n-c/3$.
If $|E(G)|<6(\ell-1)n-c$, then
$$0\ge -a_1+n-6(\ell-1)^2+2c/3-(12\ell^2-18\ell+6)n/a_1,$$
which requires
\begin{align}
a_1&<\frac 12\left(n-6(\ell-1)^2+2c/3-\sqrt{(n-6(\ell-1)^2+2c/3)^2-(48\ell^2-72\ell+24)n}\right) \quad\text{or}\\
a_1&>\frac 12\left(n-6(\ell-1)^2+2c/3+\sqrt{(n-6(\ell-1)^2+2c/3)^2-(48\ell^2-72\ell+24)n}\right)\label{ineq:lb}.
\end{align}
Since $c\ge 45\ell^2-72\ell+27$, it follows that $2c/3\ge 30\ell^2-48\ell+18\ge 24\ell^2-36\ell+12+6(\ell-1)^2$.
Therefore, if inequality~(\ref{ineq:lb}) holds, then $a_1\ge n$.
This violates inequality~(\ref{a_i}), so we conclude that
$$a_1<\frac 12\left(n-6(\ell-1)^2+2c/3-\sqrt{(n-6(\ell-1)^2+2c/3)^2-(48\ell^2-72\ell+24)n}\right).$$
Since 
$$\lim_{n\to \infty}\frac {n-6(\ell-1)^2+\frac23c-\sqrt{\left(n-6(\ell-1)^2+\frac23c\right)^2-(48\ell^2-72\ell+24)n}}{2}= 12\ell^2-18\ell+6,$$
it follows from the integrality of $a_1$ that for $n$ sufficiently large, $a_1\le12\ell^2-18\ell+6$.
Therefore $|[V_2,V_3]|\ge 2(\ell-1)n-6(\ell-1)^2-(12\ell^2-18\ell+6)\ge 2(\ell-1)n-c/3$, a contradiction.

Now assume that $|[V_1,V_2]|< 2(\ell-1)n-c/3$.
Therefore $T_{3,k_3}$ exists.
If $T_{3,k_3}$ contains vertices in both $V_1$ and $V_2$, then by Case 1 we conclude that $|E(G)|\ge 6(\ell-1)n-c$.
Therefore we assume that $T_{3,k_3}$ contains a single vertex $y\in V_1\cup V_2$, and $d(y)\le 6(\ell-1)n/a_3$.
Since every vertex in $V_1$ has at least $\ell-1$ neighbors in both $V_2$ and $V_3$ and $y$ has only $\ell-2$ neighbors in $V_1\cup V_2$, we conclude that $y\in V_2$.

The $n-(\ell-2)$ nonneighbors of $x$ in $V_3$ each have at least $\ell$ neighbors in $N_1(x)$.
Similarly, each vertex in $V_1\setminus (N_1(x)\cup N_1(y))$ has at least $\ell$ neighbors in $N_3(y)$.
Since $|V_1\setminus (N_1(x)\cup N_1(y))|\ge n-6(\ell-1)n/a_1-(\ell-2)$, we conclude that
$$|[V_1,V_3]|\ge 2\ell n-6\ell(\ell-1)n/a_1-2\ell(\ell-2).$$
Using inequalities~(\ref{ineq:VV}) and~(\ref{a_i}), we have
\begin{align*}
|E(G)|&=|[V_1,V_3]|+|[V_2,V_3]|+|[V_1,V_2]|\\
&\ge (2\ell n-6\ell(\ell-1)n/a_1-2\ell(\ell-2))+(4(\ell-1)n-12(\ell-1)^2-a_1-a_3)\\
&\ge -a_1+6(\ell-1)n+2n-a_3-(14\ell^2-28\ell+12)-6\ell(\ell-1)n/a_1\\
&\ge -a_1+6(\ell-1)n+n-(14\ell^2-28\ell+12)-6\ell(\ell-1)n/a_1.
\end{align*}
Therefore $|E(G)|<6(\ell-1)n-c$ only if 
\begin{align}
a_1&< \frac12\left(n+c-(14\ell^2-28\ell+12)-\sqrt{(n+c-(14\ell^2-28\ell+12))^2-24\ell(\ell-1)n}\right) \quad\text{or}\\
a_1&> \frac12\left(n+c-(14\ell^2-28\ell+12)+\sqrt{(n+c-(14\ell^2-28\ell+12))^2-24\ell(\ell-1)n}\right).\label{ineq:lb2}
\end{align}
Since $c\ge 26\ell^2-40\ell+12$, it follows that $c-(14\ell^2-28\ell+12)\ge 12\ell(\ell-1)$.
Therefore, if inequality~(\ref{ineq:lb2}) holds, then $a_1\ge n$.
This violates inequality~(\ref{a_i}), so we conclude that
$$a_1< \frac12\left(n+c-(14\ell^2-28\ell+12)-\sqrt{(n+c-(14\ell^2-28\ell+12))^2-24\ell(\ell-1)n}\right).$$
Since 
$$\lim_{n\to \infty}\frac{\left(n+c-(14\ell^2-28\ell+12)-\sqrt{(n+c-(14\ell^2-28\ell+12))^2-24\ell(\ell-1)n}\right)}2 =6\ell(\ell-1),$$
it follows from the integrality of $a_1$ that for $n$ sufficiently large, $a_1\le 6\ell(\ell-1)$.
Therefore $|[V_2,V_3]|\ge 2(\ell-1)n-6(\ell-1)^2-6\ell(\ell-1)\ge 2(\ell-1)n-c/3$, a contradiction.
\end{proof}

\section{Conclusion}\label{sec5}

We conclude with several open questions and conjectures.
First, we conjecture that in a sufficiently large, sufficiently unbalanced host graph, the constructions in Section{~\ref{sec:con} are best possible.

\begin{conjecture}
Let $\ell$ and $m$ be positive integers such that $\ell>m$.
For $n_1\ge n_2\ge n_3$, $n_3$ sufficiently large compared to $\ell$, and $n_1$ sufficiently large compared to $n_3$, 
\[ \sat(K_{n_1,n_2,n_3},K_{\ell,m,m})= 2m(n_1+n_2+n_3)+(\ell-m)(n_2+2n_3)-3\ell m-3.\]
\end{conjecture}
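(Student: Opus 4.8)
The upper bound is Theorem~\ref{thm:lmmupperbound}, so the content of the conjecture is the matching lower bound
\[\sat(K_{n_1,n_2,n_3},K_{\ell,m,m})\ge 2mn_1+(\ell+m)n_2+2\ell n_3-3\ell m-3,\]
which we would attack by adapting the proof of Theorem~\ref{thm:lll}; the new feature is the asymmetry in the weights $2m$, $\ell+m$, $2\ell$ of $V_1$, $V_2$, $V_3$ and the extra term $(\ell-m)(n_2+2n_3)$ it produces. Fix a $K_{\ell,m,m}$-saturated subgraph $G$ of $K_{n_1,n_2,n_3}$ of minimum size.

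The plan is first to run a minimum-degree reduction. As in the opening of the proof of Theorem~\ref{thm:lll}, a vertex $v\in V_i$ with fewer than $m$ neighbors in $V_{i+1}$ or in $V_{i+2}$ (and not complete to a partite set) cannot have every incident nonedge complete a copy of $K_{\ell,m,m}$, since the part of the copy lying in $V_i$ cannot be accommodated; hence $\delta_i(G)\ge 2m$ for all $i$. Next I would prove generalizations of Lemmas~\ref{lem:deltai} and~\ref{lem:mindeg} tuned to the unbalanced regime: because $n_1$ is enormous compared with $n_2$, if $\delta_1\ge 2m+1$ then already $|E(G)|\ge(2m+1)n_1$ exceeds the target, and a variant of Lemma~\ref{lem:deltai} shows that if $\delta_2$ or $\delta_3$ exceeds a threshold polynomial in $\ell$ then enough edges are forced. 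We thereby reduce to $\delta_1=2m$ with $\delta_2,\delta_3$ bounded in terms of $\ell$; one should push this further to show that, outside a set of $O(\ell)$ vertices, every vertex of $V_3$ has degree at least $2\ell-O(1)$ and every vertex of $V_2$ degree at least $\ell+m-O(1)$, since otherwise (arguing as for $\delta_i\ge 2m$, but now tracking the $\ell-1$ auxiliary vertices of the completing copy) one of the bipartite graphs $[V_1,V_3]$, $[V_2,V_3]$ is forced to carry $\approx(\ell-1)n_1$ edges, again too many when $n_1\gg n_2$.

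With the degrees controlled, choose $v_i\in V_i$ of minimum degree and set $S_i=N_i(v_{i-1})\cup N_i(v_{i+1})$; these play the role of the sets $S_i$ in Construction~\ref{con:1}. Using saturation at each $v_i$ together with $|N(v_i)|=\delta_i$, one shows that $|S_i|=O(\ell)$, that $S_i$ and $S_{i+1}$ are complete to each other up to $O(\ell^2)$ missing edges (exactly as in Theorem~\ref{thm:lll}), and --- the crucial new step --- that, after orienting the leftover bipartite graphs on $(V_i\setminus S_i,V_j\setminus S_j)$ as in Construction~\ref{con:1}, every vertex of $V_3\setminus S_3$ has at least $\ell-m$ neighbors toward each of $V_1\setminus S_1$ and $V_2\setminus S_2$, and every vertex of $V_2\setminus S_2$ at least $\ell-m$ neighbors toward $V_1\setminus S_1$, while vertices of $V_1\setminus S_1$ need none. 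Charging $2m$ edges per vertex of $V_1$, $\ell+m$ per vertex of $V_2$, and $2\ell$ per vertex of $V_3$, and subtracting the $3\ell m+3$ over-count from the near-complete joins among the $S_i$ and the three removed edges, gives the desired bound.

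The main obstacle is this last structural claim. A low-degree vertex $u\in V_i\setminus S_i$ might try to avoid its $\ell-m$ genuine extra neighbors by routing the completing copies through the large sets $S_j$ or by clustering with other low-degree vertices into denser bipartite patches, and ruling this out is essentially an extremal statement: the leftover bipartite graphs must each carry $\Omega(\ell-m)$ edges per vertex on the appropriate side and cannot share edges to beat the count. This is precisely the phenomenon exploited by Constructions~\ref{con:4} and~\ref{con:5} to save edges when the host is balanced, which is why the conjecture insists that $n_1$ be large relative to $n_3$ (and, as the minimum-degree reduction makes clear, really relative to $n_2$). I expect a full proof to require either a discharging argument carried out simultaneously over $V_1\cup V_2\cup V_3$ or an induction on $\ell-m$ that peels off one $(\ell-m)$-regular layer at a time, and this is the step we have not been able to complete in general.
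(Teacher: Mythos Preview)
The statement you are attempting to prove is presented in the paper as a \emph{conjecture}, not a theorem; the paper offers no proof and lists it explicitly among the open problems in Section~\ref{sec5}. There is therefore no proof in the paper to compare your proposal against.

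Your write-up is itself not a proof but a plan, and you say as much: you close with ``this is the step we have not been able to complete in general.'' The outline you give is sensible and tracks the paper's strategy for the cases it does settle (Theorems~\ref{thm:lll} and~\ref{thm:lll-1}): reduce to controlled minimum degrees via analogues of Lemmas~\ref{lem:deltai} and~\ref{lem:mindeg}, locate the sets $S_i$, and then count. You also correctly identify where the difficulty lies---forcing the extra $(\ell-m)(n_2+2n_3)$ edges in the leftover bipartite graphs---and correctly observe that Constructions~\ref{con:4} and~\ref{con:5} show this can fail in the balanced regime, which is exactly why the conjecture requires $n_1$ large relative to $n_3$. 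But none of this constitutes a proof, and the paper does not claim one.
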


\begin{conjecture}
Let $\ell$, $m$, and $p$ be positive integers such that $\ell\ge m>p$.
For $n_1\ge n_2\ge n_3$, $n_3$ sufficiently large compared to $\ell$, and $n_1$ sufficiently large compared to $n_3$, 
\[ \sat(K_{n_1,n_2,n_3},K_{\ell,m,p})= 2(m-1)(n_1+n_2+n_3)+(\ell-m)(n_2+2n_3)-3\ell(m-1)+3m-3.\]
\end{conjecture}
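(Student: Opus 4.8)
\medskip
\noindent\textbf{Proof strategy.} Since the claimed value is exactly the upper bound of Theorem~\ref{thm:lmpupperbound}, only the matching lower bound is at issue. Let $G$ be a $K_{\ell,m,p}$-saturated subgraph of $K_{n_1,n_2,n_3}$ of minimum size. The first step is to reduce to the case where the three minimum degrees $\delta_1(G),\delta_2(G),\delta_3(G)$ are all bounded in terms of $\ell$: a version of Lemma~\ref{lem:deltai}, cascaded exactly as in Lemma~\ref{lem:mindeg} (if $\delta_1$ is large then $|E(G)|\ge \delta_1 n_1$ already beats the target; if $\delta_1$ is moderate but $\delta_2$ is large then $|E(G)|\ge \delta_2 n_2$ does; then handle $\delta_3$), shows that otherwise $|E(G)|$ already exceeds the target. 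This is precisely where the hypotheses that $n_3$ is large relative to $\ell$ and $n_1$ is large relative to $n_3$ are needed; once $G$ has bounded minimum degrees the problem becomes one of counting.

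The second step studies the local structure. Each partite set either contains many vertices of large degree --- which by themselves contribute enough edges --- or contains a vertex $v$ of bounded degree. For such a $v\in V_i$, every nonneighbor $w$ in $V_{i+1}$ (resp.\ $V_{i+2}$) must complete a copy of $K_{\ell,m,p}$ when joined to $v$; analyzing which of the three parts $v$ can occupy in that copy, and using $p\le m-1$, forces the $\ell$-part to lie in $V_i$ and hence forces $w$ to have at least $\ell-1$ neighbors back in $V_i$. Combining this with the trivial fact that every vertex of $V_i$ has at least $m-1$ neighbors in each neighboring partite set (or is completely joined to one of them), and with an argument in the spirit of the identification of $S$ in the proofs of Theorems~\ref{thm:lll} and~\ref{thm:lll-1} --- two bounded-degree vertices in different partite sets force an almost complete join between their partite sets --- one aims to pin down core sets $S_1,S_2,S_3$ with $|S_i|=m-1$, each joined to all but boundedly many vertices of the other two partite sets, together with the asymmetric pattern that, outside the cores, vertices of $V_3$ need about $\ell-m$ neighbors in each of $V_1$ and $V_2$, vertices of $V_2$ about $\ell-m$ in $V_1$, and vertices of $V_1$ none. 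The ordering $n_1\ge n_2\ge n_3$ together with minimality forces which partite set plays which role, since the minimum-size graph loads the expensive requirement of $\ell-1$ neighbors in a partite set onto the edges meeting the smallest sets.

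The third step is to count. Summing $|[V_1,V_2]|$, $|[V_1,V_3]|$, and $|[V_2,V_3]|$ from the contributions of the three cores (minus their pairwise overlaps) and of the bounded-degree remainders reproduces
\[
|E(G)|\ \ge\ 2(m-1)(n_1+n_2+n_3)+(\ell-m)(n_2+2n_3)-3(m-1)(\ell-1),
\]
which equals the displayed quantity because $-3(m-1)(\ell-1)=-3\ell(m-1)+3m-3$. To land on the constant exactly --- rather than within an additive constant depending on $\ell$, as in Section~\ref{sec4} --- one must also show that each of the three bipartite graphs in $G-(S_1\cup S_2\cup S_3)$ attains its minimum, i.e.\ that on the side belonging to the smaller partite set every degree equals $\ell-m$.

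This exactness claim, and more broadly the structural step when $p<m<\ell$, is where I expect the real difficulty to lie. The clean picture that drives the $K_{\ell,\ell,\ell}$ and $K_{\ell,\ell,\ell-1}$ proofs --- the bounded-degree vertices form an independent set with completely forced common neighborhoods --- breaks down here: a bounded-degree vertex of $V_i$ can be saturated while occupying the $\ell$-part, the $m$-part, or the $p$-part, so the analysis splits into several cases. More seriously, as the constructions of Section~\ref{sec:con} already reveal, a vertex with fewer than $\ell-1$ neighbors in a partite set can still be saturated if it is protected by a small complete-join (clique-swap) structure, precisely as in Construction~\ref{con:5} and in the proof in Section~\ref{sec4}. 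Ruling out these configurations in the unbalanced regime --- or, should they turn out to be unavoidable, replacing the target by the clique-swapped value $2(m-1)(n_1+n_2+n_3)+(\ell-m)(n_2+2n_3)-3(m-1)(\ell-1)-3\FL{\frac{\ell-m}{2}}\CL{\frac{\ell-m}{2}}$ --- is the crux. When $m\in\{\ell-1,\ell\}$ the clique-swap is vacuous and the plan above should go through essentially as in the proofs of Theorems~\ref{thm:lll} and~\ref{thm:lll-1}.
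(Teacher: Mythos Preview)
The statement you are attempting to prove is a \emph{conjecture} in the paper, not a theorem: it appears in Section~\ref{sec5} (Conclusion) among ``open questions and conjectures,'' and the paper offers no proof or proof sketch for it. There is therefore no argument in the paper to compare your proposal against.

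Your outline is a sensible plan of attack modeled on the proofs of Theorems~\ref{thm:lll} and~\ref{thm:lll-1}, and you have correctly identified the main obstruction: once $p<m-1$ (and especially once $m<\ell$), a low-degree vertex need not force the rigid common-neighborhood structure that drives those proofs, and clique-swap configurations of the type in Constructions~\ref{con:4} and~\ref{con:5} become relevant. That this difficulty is real is exactly why the authors state the result only as a conjecture (and why, even for the special case $K_{\ell,\ell,\ell-2}$ in Section~\ref{sec4}, they obtain the answer only up to an additive constant). So your proposal is not a proof but a program; it does not close the gap, and neither does the paper.
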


Following the direction taken in~\cite{FJPW}, one can study the saturation number of $K_{\ell,m,p}$ in $k$-partite graphs for $k>3$.
The following is the logical place to begin such research.

\begin{question}
Let $K_k^n$ denote the complete $k$-partite graph in which all partite sets have size $n$.
For $\ell\ge 2$, $k\ge 4$, and $n$ sufficiently large, what is $\sat(K_k^n,K_{\ell,\ell,\ell})$?
\end{question}

We also note that if $G$ is a graph with chromatic number at most $3$, then determining $\sat(K_{n_1,n_2,n_3},G)$ is nontrivial.
Thus it is natural to consider the saturation number of bipartite graphs in complete tripartite graphs.
As a first example, we compute the saturation number of $C_4$ in tripartite graphs.

\begin{proposition}
For $n_1\ge n_2\ge n_3\ge 2$,
$$\sat(K_{n_1,n_2,n_3},C_4)=n_1+n_2+n_3.$$
\end{proposition}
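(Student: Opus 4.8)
The plan is to establish matching upper and lower bounds of $n_1+n_2+n_3$. For the upper bound I would exhibit an explicit $C_4$-saturated subgraph of $K_{n_1,n_2,n_3}$ with exactly $n_1+n_2+n_3$ edges. The natural candidate is a ``double star'' type construction: fix one vertex $v_1^1\in V_1$ and one vertex $v_2^1\in V_2$, join $v_1^1$ to all of $V_2\cup V_3$ and join $v_2^1$ to all of $V_1\cup V_3$, except delete the edge $v_1^1v_2^1$ if it is counted twice — in fact one should take $E(G)=\{v_1^1v_2^j: j\in[n_2]\}\cup\{v_1^1v_3^j:j\in[n_3]\}\cup\{v_2^1v_1^j:2\le j\le n_1\}$, which has $n_2+n_3+(n_1-1)$ edges; then one more edge, say $v_3^1v_2^2$ or $v_1^2v_3^1$, must be added somewhere to push the degree of every vertex up to at least $1$ and reach $n_1+n_2+n_3$. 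I would check that this $G$ is $C_4$-free (it is essentially a tree, or a tree plus one pendant edge, so it has at most one cycle and that cycle — if any — would have length $\ge 6$) and that adding any nonedge $e$ of $K_{n_1,n_2,n_3}$ creates a $C_4$: any new edge $xy$ with $x\in V_i$, $y\in V_j$ will close a $4$-cycle through the two ``hub'' vertices $v_1^1,v_2^1$, since every vertex outside the hubs has a neighbor among $\{v_1^1,v_2^1\}$, and one checks the hubs have a common neighbor in the third class.

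For the lower bound I would argue that a $C_4$-saturated subgraph $G$ of $K_{n_1,n_2,n_3}$ must have minimum degree at least $1$, giving $|E(G)|\ge \frac12(n_1+n_2+n_3)$ trivially, which is not enough — so the real work is to show $G$ must be connected, or nearly so. If $v$ is an isolated vertex of $G$, then for $v\in V_i$ and any $w\in V_{i+1}$, adding $vw$ must create a $C_4$, forcing $v$ and $w$ to have two common neighbors — impossible since $v$ has none. Hence $\delta(G)\ge 1$. Next, if $G$ had two components $C$ and $C'$, pick $x\in C\cap V_i$ and $y\in C'\cap V_j$ with $i\ne j$ (possible unless one component lies entirely in a single part, which one handles separately); adding $xy$ must create a $C_4$, forcing $x,y$ to have two common neighbors, contradicting that they lie in different components. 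A slightly more careful version of this argument shows $G$ is connected, whence $|E(G)|\ge (n_1+n_2+n_3)-1$. To get the extra $+1$, I would observe that a spanning tree $T$ of $K_{n_1,n_2,n_3}$ is never $C_4$-saturated: a tree has two vertices $x,y$ at distance $\ge 3$ (when $n_1+n_2+n_3$ is large), lying in different parts or reachable to a common part, and adding the edge between two such vertices cannot create a $C_4$. So $G$ is connected but not a tree, giving $|E(G)|\ge n_1+n_2+n_3$.

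The main obstacle I anticipate is the connectivity step of the lower bound and the small/degenerate cases: one must rule out a component contained entirely in one partite class (which forces those vertices to have huge degree into a tiny other class, quickly contradicting minimality or the $C_4$-free condition when $n_3\ge 2$), and one must handle the tight cases where $n_3$ is as small as $2$ separately, since there the ``distance $\ge 3$'' argument for excluding spanning trees needs the host graph to be big enough in total. I would organize the lower-bound proof as: (i) $\delta(G)\ge 1$; (ii) no component is monochromatic; (iii) $G$ is connected; (iv) $G$ is not a tree; and conclude $|E(G)|\ge n_1+n_2+n_3$, matching the construction.
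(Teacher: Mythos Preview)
Your lower-bound plan (connected, then not a tree) is exactly what the paper does, and your elaboration of why connectivity and non-tree-ness hold is fine.

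The real problem is your upper-bound construction. In the graph you finally settle on,
\[
E(G)=\{v_1^1v_2^j: j\in[n_2]\}\cup\{v_1^1v_3^j:j\in[n_3]\}\cup\{v_2^1v_1^j:2\le j\le n_1\}\ \cup\ \{\text{one extra edge}\},
\]
the two hubs $v_1^1$ and $v_2^1$ have \emph{no} common neighbor: $N(v_1^1)\subseteq V_2\cup V_3$ while $N(v_2^1)\subseteq V_1$. Your saturation check (``close a $4$-cycle through the two hubs'') therefore fails whenever the added edge is incident to a hub. Concretely, $v_2^1v_3^2$ is a nonedge of $G$ (for $n_3\ge 2$), and in $G+v_2^1v_3^2$ any $C_4$ through that edge would need a path $v_2^1\!-\!a\!-\!b\!-\!v_3^2$; but $b$ must be $v_1^1$ (the only neighbor of $v_3^2$), and then $a\in N(v_2^1)\subseteq V_1$ cannot be adjacent to $v_1^1\in V_1$. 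So $G$ is not $C_4$-saturated, regardless of which single extra edge you throw in.

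The paper's construction avoids this by using three hubs cyclically rather than two: take $v_i^1$ joined to all of $V_{i+1}$ for each $i\in[3]$. This has $n_1+n_2+n_3$ edges, contains the triangle $v_1^1v_2^1v_3^1$, and for any nonedge $xy$ one finds a $4$-cycle by routing through the appropriate pair of hubs (now any two hubs $v_i^1,v_{i+1}^1$ are adjacent and also share $v_{i+2}^1$ as a common neighbor). Replace your double-star with this cyclic construction and the proof goes through.
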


\begin{proof}
It is clear that a $C_4$-saturated subgraph of $K_{n_1,n_2,n_3}$ must be connected, and no spanning tree of $K_{n_1,n_2,n_3}$ is $C_4$-saturated.
It is also straightforward to check that the graph with edge set $\{v_i^1v_{i+1}^j|i\in [3], j\in[n_{i+1}]\}$ is $C_4$-saturated (see Figure~\ref{fig:c4}).
\end{proof}

\begin{figure}[h]
\centering
\begin{tikzpicture}[x=.6cm,y=.6cm]
\path
(0,1.75) node [shape=ellipse,minimum width=1.5cm, minimum height=.75cm,draw](V1){$V_1-v_1^1$}
(-5.5,-5.196) node [shape=ellipse,minimum width=1.5cm, minimum height=.75cm,draw](V3){$V_3-v_3^1$}
(5.5,-5.196) node  [shape=ellipse,minimum width=1.5cm, minimum height=.75,draw](V2){$V_2-v_2^1$}

(0,-1.7) node [draw=none](n1){$v_1^{1}$}
(1.4,-4.3) node [draw=none](n2){$v_2^{1}$}
(-1.5,-4.3) node [draw=none](n3){$v_3^{1}$};
\fill (0,-.75) circle (3pt)
(2.4,-4.75)circle (3pt) 
(-2.4,-4.75)circle (3pt);
\draw (0,-.75)to [bend left](V2);
\draw (2.4,-4.75)to [bend left](V3);
\draw (-2.4,-4.75)to [bend left](V1);
\draw (0,-.75) to (2.4,-4.75) to (-2.4,-4.75) to(0,-.75);
\end{tikzpicture}
\caption[Construction~\ref{con:1}, a $K_{\ell,m,m}$-saturated subgraph of $K_{n_1,n_2,n_3}$]{\label{fig:c4} A $C_4$-saturated subgraph of $K_{n_1,n_2,n_3}$.  Solid lines denote complete joins between two sets.}
\end{figure}
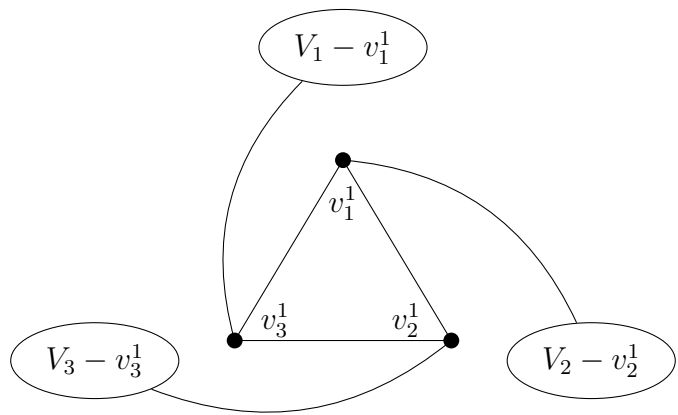

Observe that $\sat(K_{n_1,n_2,n_3},C_4)$ and the sharpness example are not obtained using the bipartite saturation number of $C_4$.
Thus it appears that the study of saturation numbers of bipartite graphs in tripartite graphs will differ from the work initiated in~\cite{GKS} and~\cite{MS}.

\end{document}